\numberwithin{equation}{section}
\theoremstyle{plain}
\newtheorem{thm}{Theorem}[section]
\newtheorem{lem}[thm]{Lemma}
\newtheorem{prop}[thm]{Proposition}
\newtheorem{cor}[thm]{Corollary}
\newtheorem*{thm*}{Theorem}
\newtheorem*{lem*}{Lemma}
\newtheorem*{prop*}{Proposition}
\newtheorem*{cor*}{Corollary}
\theoremstyle{definition}
\newtheorem{defn}[thm]{Definition}
\newtheorem*{defn*}{Definition}
\newtheorem{ex}[thm]{Example}
{}
\newtheorem{rem}[thm]{Remark}
\newtheorem*{rem*}{Remark}
\newtheorem{notation}[thm]{Notation}{}
{}
{}
\newtheorem*{ack}{Acknowledgements}{}
\theoremstyle{remark}
{}
{}
{}
\def\cie{\subseteq}
\def\iso{\cong}
\def\intersec{\cap}
\def\to{\longrightarrow}
\def\int{\mathbb{Z}}
\def\L{\mathbf{L}}
\def\l{\lambda}
\def\S{\Sigma}
\def\D{\mathsf{D}}
\def\mcS{\mathcal{S}}
\def\mcT{\mathcal{T}}
\def\mcP{\mathcal{P}}
\def\mcQ{\mathcal{Q}}
\def\GGamma{\mathit{\Gamma}}
\def\CRing{\mathsf{CRing}}
\DeclareMathOperator{\Spec}{Spec}
\DeclareMathOperator{\Spc}{Spc}
\DeclareMathOperator{\Supp}{Supp}
\DeclareMathOperator{\supp}{supp}
\DeclareMathOperator{\supph}{supph}
\DeclareMathOperator{\im}{im}
\DeclareMathOperator{\Hom}{Hom}
\DeclareMathOperator{\Vis}{Vis}
\title{Derived categories of absolutely flat rings}
\author{Greg Stevenson}
\address{Universit\"at Bielefeld, Fakult\"at f\"ur Mathematik, BIREP Gruppe, Postfach 10\,01\,31, 33501 Bielefeld, Germany.}
\email{gstevens@math.uni-bielefeld.de}
\thanks{
This research was partly supported by a fellowship from the Alexander von Humboldt Foundation.}
\begin{document}

\subjclass[2010]{18E30, 16E50}

\keywords{Derived category, absolutely flat ring, localising subcategory, telescope conjecture}

\begin{abstract}
\noindent Let $S$ be a commutative ring with topologically noetherian spectrum and let $R$ be the absolutely flat approximation of $S$. We prove that subsets of the spectrum of $R$ parametrise the localising subcategories of $\D(R)$. Moreover, we prove the telescope conjecture holds for $\D(R)$. We also consider unbounded derived categories of absolutely flat rings which are not semi-artinian and exhibit an example of a cohomological Bousfield class that is not a Bousfield class.
\end{abstract}

\maketitle

\tableofcontents

\section{Introduction}
Given a commutative noetherian ring $S$ the structure of the unbounded derived category $\D(S)$ and its full subcategory of compact objects $\D^\mathrm{perf}(S)$ is very well understood. By a result of Hopkins and Neeman \cite{NeeChro} the lattice of thick subcategories of $\D^\mathrm{perf}(S)$ is isomorphic to the lattice of specialisation closed subsets of $\Spec S$. Neeman proves in \cite{NeeChro}*{Theorem~2.8} that when one passes to $\D(S)$ this extends to a lattice isomorphism between the subsets of $\Spec S$ and localising subcategories of $\D(S)$.

Now suppose $S$ is commutative but not noetherian. Then, by work of Thomason \cite{Thomclass}*{Theorem~3.15}, the classification of thick subcategories of $\D^\mathrm{perf}(S)$ is still valid; there is a lattice isomorphism between thick subcategories of $\D^\mathrm{perf}(S)$ and Thomason subsets of $\Spec S$ i.e., those subsets which can be written as unions of closed subsets with quasi-compact complements. However, we are almost completely ignorant concerning the structure of $\D(S)$; we do not even know if the collection of localising subcategories forms a set rather than a proper class. Some partial results, indicating the possible complexity of the lattice of localising subcategories, have been obtained by Neeman \cite{NeeOddball} and then in later work of Dywer and Palmieri \cite{Dwyer_Palmieri_BL} for non-noetherian truncated polynomial rings.

In this work we consider the situation for a commutative absolutely flat ring $R$ i.e., $R$ is commutative, reduced, and zero dimensional. We observe two starkly contrasting ways in which the derived category of such a ring can behave. If $R$ is not semi-artinian we show in Theorem~\ref{prop_gen_fail} that the residue fields of $R$ do not generate the derived category and as a corollary we exhibit a localising subcategory which cannot be realised as the kernel of tensoring by any object of $\D(R)$ (Corollary~\ref{cor_not_bousfield}). This gives a counterexample to the analogue of a conjecture of Hovey and Palmieri \cite{HP_Bousfield}*{Conjecture~9.1} (concerning the stable homotopy category) in the setting of derived categories of rings.

On the other hand, suppose $S$ is a commutative ring with topologically noetherian spectrum. The absolutely flat approximation of $S$ is an absolutely flat ring $S^\mathrm{abs}$ together with a morphism $S\to S^\mathrm{abs}$ through which all other morphisms from $S$ to an absolutely flat ring factor. We prove in Theorem~\ref{thm_class} that Neeman's classification is valid for $\D(S^\mathrm{abs})$: there are a set of localising subcategories and the lattice they form is isomorphic to the powerset of $\Spec S^\mathrm{abs}$. Furthermore, every localising subcategory of $\D(S^\mathrm{abs})$ is the kernel of tensoring with a module and the telescope conjecture holds (Theorem~\ref{thm_telescope}).

Now let us very briefly sketch the contents of the paper. Section~\ref{sec_nonsense} contains abstract results on tensor triangular geometry and support theory. More specifically, after some brief recollections on supports, we prove some results concerning the behaviour of supports under base change (Section~\ref{sec_basechange}) and give a topological restriction on the supports of quotients by smashing subcategories (Section~\ref{sec_smashing}); if the reader is mainly interested in the results concerning derived categories she can safely skip or refer back to this section. Section~\ref{sec_af_prelims} contains some preliminary material on absolutely flat rings and absolutely flat approximations. The main results concerning derived categories are proved in Section~\ref{sec_dercat}.

\begin{ack}
I am grateful to Jesse Burke and Mike Prest for helpful comments relating to this work. Many thanks are also due to Ivo Dell'Ambrogio for several helpful comments on a preliminary version of this article.
\end{ack}

\section{Some tensor triangular abstract nonsense}\label{sec_nonsense}

\subsection{Preliminaries}
We begin by briefly discussing some notions and notation that we will use throughout (we assume some familiarity with the subject matter; for further details and definitions the interested reader should consult \cite{BaRickard}). Let $\mathcal{K}$ be an essentially small rigid tensor triangulated category. We say a subset $\mathcal{V}$ of $\Spc \mathcal{K}$ is \emph{Thomason} if $\mathcal{V}$ can be written as a union of closed subsets of $\Spc \mathcal{K}$ each of which has quasi-compact complement. There is, by \cite{BaSpec}*{Theorem~4.10}, a bijection between Thomason subsets of $\Spc \mathcal{K}$ and thick tensor ideals of $\mathcal{K}$ sending a Thomason subset $\mathcal{V}$ to the ideal $\mathcal{K}_\mathcal{V}$ of all objects whose support is contained in $\mathcal{V}$.

Now suppose $\mcS$ is a rigidly-compactly generated tensor triangulated category. We denote, for $\mathcal{V}\cie \Spc \mcS^c$ a Thomason subset, by $\GGamma_\mathcal{V}\mcS$ the localising ideal generated by $\mcS^c_\mathcal{V}$. This is a smashing subcategory of $\mcS$ and the corresponding Rickard idempotents, giving rise to the acyclisation and localisation functors with respect to $\GGamma_\mathcal{V}\mcS$ will be denoted by $\GGamma_\mathcal{V}\mathbf{1}$ and $L_\mathcal{V}\mathbf{1}$ respectively.

Given $x\in \Spc \mcS^c$ we, as usual, set $\mathcal{V}(x) = \overline{\{x\}}$ and
\begin{displaymath}
\mathcal{Z}(x) = \{y \; \vert \; x\notin \mathcal{V}(y)\}.
\end{displaymath}

A point $x$ of $\Spc \mcS^c$ is said to be \emph{visible} if there exist Thomason subsets $\mathcal{V}, \mathcal{W}$ of $\Spc \mcS^c$ such that $\mathcal{V}\setminus \mathcal{W} = \{x\}$ (this is not the same definition as given in \cite{BaRickard} cf.\ Remark \ref{rem_vis_defn} for further details). In this case we set
\begin{displaymath}
\GGamma_x\mathbf{1} = \GGamma_\mathcal{V}\mathbf{1} \otimes L_\mathcal{W}\mathbf{1}
\end{displaymath}
and recall from \cite{BaRickard}*{Corollary~7.5} that up to isomorphism $\GGamma_x\mathbf{1}$ does not depend on the choice of $\mathcal{V}$ and $\mathcal{W}$.

\begin{rem}\label{rem_goodchoice}
With notation as above we can always choose $\mathcal{W} = \mathcal{Z}(x)$. Indeed, $\mathcal{Z}(x)$ is always Thomason (by virtue of being the support of the prime ideal of $\mcS^c$ corresponding to the point $x$) and $x\notin \mathcal{W}$ combined with the fact that $\mathcal{W}$ is specialisation closed implies $\mathcal{W} \cie \mathcal{Z}(x)$.
\end{rem}

Denoting by $\Vis \mcS^c$ the set of visible points in $\Spc \mcS^c$ we define the \emph{big support} of $X\in \mcS$ to be
\begin{displaymath}
\Supp X = \{x\in \Vis \mcS^c \; \vert \; \GGamma_x\mathbf{1} \otimes X \neq 0\}.
\end{displaymath}

As the big support is a subset of $\Vis \mcS^c$ which can be properly contained in $\Spc \mcS^c$ we cannot in general have, for $s\in \mcS^c$, an equality between $\supp s$ and $\Supp s$. However, no information is lost in the sense that the big support of a compact object determines the usual support. Although we will not use this fact we will give a proof.

\begin{lem}
Let $s$ be a compact object of $\mcS$. Then there is an equality
\begin{displaymath}
\Supp s = \supp s \intersec \Vis \mcS^c.
\end{displaymath}
\end{lem}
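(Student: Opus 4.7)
The plan is to establish both containments separately, using throughout the freedom from Remark~\ref{rem_goodchoice} to compute $\GGamma_x\mathbf{1}$ as $\GGamma_\mathcal{V}\mathbf{1}\otimes L_{\mathcal{Z}(x)}\mathbf{1}$ for some Thomason $\mathcal{V}$ with $\mathcal{V}\setminus \mathcal{Z}(x)=\{x\}$.

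The inclusion $\Supp s \cie \supp s \intersec \Vis \mcS^c$ is essentially immediate. Since $\supp s$ is Thomason and hence specialisation closed, $x\notin \supp s$ forces $\supp s \cie \mathcal{Z}(x)$: otherwise some $y\in \supp s$ would satisfy $x\in \mathcal{V}(y)\cie \supp s$. By the standard Thomason--Neeman correspondence this places $s$ in $\mcS^c_{\mathcal{Z}(x)}\cie \GGamma_{\mathcal{Z}(x)}\mcS$, so $L_{\mathcal{Z}(x)} s = 0$ and consequently $\GGamma_x\mathbf{1}\otimes s = \GGamma_\mathcal{V}\mathbf{1}\otimes L_{\mathcal{Z}(x)} s = 0$.

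The reverse inclusion is the substantive direction. Given $x\in \supp s \intersec \Vis \mcS^c$ and $\mathcal{V}$ as above, I would suppose $\GGamma_\mathcal{V}\mathbf{1}\otimes L_{\mathcal{Z}(x)}\mathbf{1}\otimes s = 0$ and derive a contradiction by testing against compact objects. For any $t\in \mcS^c_\mathcal{V}$ the identity $\GGamma_\mathcal{V}\mathbf{1}\otimes t\iso t$ together with the smashing property turns the assumed vanishing into $L_{\mathcal{Z}(x)}(t\otimes s)=0$; since $t\otimes s$ is compact this forces $\supp t \intersec \supp s = \supp(t\otimes s) \cie \mathcal{Z}(x)$. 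I would then use the Thomason presentation of $\mathcal{V}$ as a union of supports of compact objects to pick a specific $t$ with $x\in \supp t \cie \mathcal{V}$; combined with $x\in \supp s$ this places $x$ in $\supp t \intersec \supp s$, contradicting $x\notin \mathcal{Z}(x)$.

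The main obstacle is this second direction, whose crux is passing from the global vanishing of $\GGamma_x\mathbf{1}\otimes s$ to a constraint on $\supp t \intersec \supp s$ via compact test objects in $\mcS^c_\mathcal{V}$; once that reduction is in hand, the Thomason structure of $\mathcal{V}$ furnishes the witness compact object whose existence completes the argument. The other inclusion, by contrast, rests essentially only on specialisation-closedness of $\supp s$.
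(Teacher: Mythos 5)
Your argument is correct, and both directions hold up under scrutiny: the forward inclusion is exactly the specialisation-closedness observation plus the choice $\mathcal{W}=\mathcal{Z}(x)$ permitted by Remark~\ref{rem_goodchoice}, and the reverse inclusion, via compact test objects $t\in\mcS^c_{\mathcal{V}}$ with $x\in\supp t$, goes through: the vanishing $\GGamma_x\mathbf{1}\otimes s=0$ does give $L_{\mathcal{Z}(x)}(t\otimes s)=0$, and then the Neeman--Thomason localisation theorem (a compact object of $\mcS$ lying in $\GGamma_{\mathcal{Z}(x)}\mcS$ already lies in $\mcS^c_{\mathcal{Z}(x)}$, since that subcategory is thick and summand-closed) together with Balmer's formula $\supp(t\otimes s)=\supp t\intersec\supp s$ and the fact that Thomason subsets are unions of supports of compacts yields the contradiction $x\in\mathcal{Z}(x)$. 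You should name the Neeman--Thomason input explicitly, as it is the one nontrivial ingredient in the phrase ``since $t\otimes s$ is compact this forces\dots''. The route is genuinely different from the paper's, which simply invokes Remark~\ref{rem_goodchoice} and defers to the proof of Proposition~7.17 of \cite{BaRickard}; that argument runs through the calculus of Rickard idempotents (isomorphisms such as $\GGamma_{\mathcal{V}_1}\mathbf{1}\otimes\GGamma_{\mathcal{V}_2}\mathbf{1}\iso\GGamma_{\mathcal{V}_1\intersec\mathcal{V}_2}\mathbf{1}$ and the criterion that $\GGamma_{\mathcal{V}}\mathbf{1}\otimes L_{\mathcal{W}}\mathbf{1}$ vanishes exactly when $\mathcal{V}\cie\mathcal{W}$), reducing the claim about $s$ to one about $\GGamma_{\supp s}\mathbf{1}$. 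Your proof instead re-derives the needed instance of that calculus by hand from the classification of thick ideals, so it is more self-contained and arguably more elementary, at the cost of redoing in this special case what \cite{BaRickard} has already packaged; either way the underlying engine is the same Thomason/Neeman--Thomason classification.
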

\begin{proof}
Using Remark~\ref{rem_goodchoice} one can mimic the proof of \cite{BaRickard}*{Proposition~7.17}.
\end{proof}

\begin{prop}
Let $s$ be a compact object of $\mcS$. Then there is an equality
\begin{displaymath}
\overline{\Supp s} = \supp s.
\end{displaymath}
\end{prop}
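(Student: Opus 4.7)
The inclusion $\overline{\Supp s} \cie \supp s$ is immediate. For $s$ compact the support $\supp s$ is a Thomason subset of $\Spc \mcS^c$, in particular closed, and the preceding lemma gives $\Supp s = \supp s \intersec \Vis \mcS^c \cie \supp s$; taking closures yields $\overline{\Supp s} \cie \supp s$.

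For the reverse inclusion I would show that the visible points of $\supp s$ are dense in $\supp s$. Fix $y \in \supp s$ and a basic open neighbourhood $U = \Spc \mcS^c \setminus \mathcal{W}$ with $\mathcal{W}$ Thomason and $y \notin \mathcal{W}$; it suffices to produce a visible point of $\supp s$ lying in $U$. Write $\mathcal{V} := \supp s$, which is itself Thomason, and consider the poset $P$ of Thomason subsets $\mathcal{W}'$ satisfying $\mathcal{W} \cie \mathcal{W}' \ci \mathcal{V}$ (with strict right-hand inclusion), ordered by inclusion. Every ascending chain $\{\mathcal{W}'_\alpha\}$ in $P$ has an upper bound: the union $\bigcup_\alpha \mathcal{W}'_\alpha$ is again Thomason (unions of Thomason subsets are Thomason), and the filtered intersection $\bigcap_\alpha (\mathcal{V} \setminus \mathcal{W}'_\alpha)$ of nonempty patch-closed subsets of the spectral space $\Spc \mcS^c$ is nonempty by patch-compactness, so the union remains properly contained in $\mathcal{V}$. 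Zorn then yields a maximal element $\mathcal{W}_0 \in P$.

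The key claim will be that $\mathcal{V} \setminus \mathcal{W}_0$ is a single point. Given distinct $x, x' \in \mathcal{V} \setminus \mathcal{W}_0$, the fact that $\Spc \mcS^c$ is $T_0$ lets me assume $x' \notin \overline{\{x\}}$, which is equivalent to $x \in \mathcal{Z}(x')$. Then $\mathcal{W}_0 \cup \mathcal{Z}(x')$ is Thomason (using Remark~\ref{rem_goodchoice} together with closure under unions), strictly larger than $\mathcal{W}_0$ since it contains $x$, and still properly contained in $\mathcal{V}$ since $x' \in \mathcal{V}$ but $x' \notin \mathcal{Z}(x')$ and $x' \notin \mathcal{W}_0$. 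This contradicts the maximality of $\mathcal{W}_0$. Writing the singleton as $\{x\}$, the pair $(\mathcal{V}, \mathcal{W}_0)$ witnesses $x$ as visible, and $x \in \mathcal{V} \setminus \mathcal{W} \cie \supp s \intersec U$ supplies the required point of $\Supp s \intersec U$.

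The main obstacle will be the upper-bound step in the Zorn argument: one must rule out that an ascending union of proper Thomason subsets of $\mathcal{V}$ could exhaust $\mathcal{V}$. This is precisely where the spectral structure of $\Spc \mcS^c$ enters, through the compactness of the patch topology; once that is in hand, the remainder is a short set-theoretic manipulation inside the lattice of Thomason subsets.
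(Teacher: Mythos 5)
Your strategy is genuinely different from the paper's, though closely related. The paper notes that $\supp s$, being closed in $\Spc \mcS^c$, is itself a spectral space, so every $x\in\supp s$ is a specialisation of a point $y$ minimal with respect to specialisation in $\supp s$, and such a $y$ is visible because $\supp s \setminus \mathcal{Z}(y)=\{y\}$; you instead prove that the visible points are dense in $\supp s$ by running Zorn's lemma directly in the lattice of Thomason subsets, with patch-compactness supplying upper bounds for chains. The two arguments are cousins (the existence of minimal points in a spectral space is itself a Zorn-plus-patch-compactness fact), but yours is self-contained and gives the slightly stronger statement that every basic open set meeting $\supp s$ contains a visible point, while the paper's is shorter because it quotes the standard fact about spectral spaces.

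There is, however, a slip in your maximality step as written. Your poset $P$ consists of Thomason subsets $\mathcal{W}'$ with $\mathcal{W}\cie\mathcal{W}'\subsetneq\mathcal{V}$, so its members are subsets of $\mathcal{V}$ (and $P$ could even be empty, since the ambient Thomason set $\mathcal{W}$ need not be contained in $\mathcal{V}=\supp s$). But $\mathcal{Z}(x')$ is in general not contained in $\mathcal{V}$, so $\mathcal{W}_0\cup\mathcal{Z}(x')$ is not ``properly contained in $\mathcal{V}$'': your justification only shows $x'\notin\mathcal{W}_0\cup\mathcal{Z}(x')$, not the inclusion in $\mathcal{V}$, and hence this set need not lie in $P$, so no contradiction with maximality of $\mathcal{W}_0$ is obtained. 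The repair is immediate: either intersect with $\mathcal{V}$ throughout, i.e.\ start from $\mathcal{W}\intersec\mathcal{V}$ and use $\mathcal{W}_0\cup(\mathcal{Z}(x')\intersec\mathcal{V})$, noting that Thomason subsets are closed under finite intersections (they are the open sets of the Hochster dual topology); or drop the requirement $\mathcal{W}'\cie\mathcal{V}$ from $P$ and replace ``$\mathcal{W}'\subsetneq\mathcal{V}$'' by ``$\mathcal{V}\setminus\mathcal{W}'\neq\varnothing$''. With either modification the rest of your argument --- patch-compactness for chains, the $T_0$ property together with Remark~\ref{rem_goodchoice} for the singleton claim, and the pair $(\mathcal{V},\mathcal{W}_0)$ witnessing visibility of the remaining point --- goes through and proves the proposition.
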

\begin{proof}
Let $x$ be a point of $\supp s \setminus \Vis \mcS^c$. By the lemma it is sufficient to show there is a visible point of $\supp s$ specialising to $x$. As $\supp s$ is a closed subspace of $\Spc \mcS^c$ it is a spectral space in its own right. Thus $x$ is a specialisation of a point $y\in \supp s$ which is minimal with respect to specialisation in $\supp s$. To complete the argument it is sufficient to show that $y$ is visible. This is the case since $\supp s$ is Thomason and $\supp s \setminus \mathcal{Z}(y) = \{y\}$ by minimality of $y$ with respect to specialisation in $\supp s$.
\end{proof}

Thus there is no real danger of confusing $\supp$ and $\Supp$ at the level of compact objects so we will generally refer to the big support simply as the support and write $\supp$ for both. Of course, the reader may just instead assume that all points of the spectra we consider are visible - this will be the case in our applications.

\subsection{Base change for Rickard idempotents}\label{sec_basechange}
We now prove some facts concerning the behaviour of supports for rigidly-compactly generated tensor triangulated categories under base change along exact monoidal functors. The results are essentially what one would expect in analogy with the case of compact objects. 

We are interested in the following setup: $\mcS$ and $\mcT$ are rigidly-compactly generated tensor triangulated categories and $F\colon \mcS \to \mcT$ is a coproduct and compact object preserving monoidal functor (whose right adjoint, which exists by Brown representability, we denote by $G$ - we note that $G$ necessarily preserves coproducts, see for instance \cite{NeeGrot}*{Theorem~5.1}). We want to consider what $F$ does to the Rickard idempotents of \cite{BaRickard}; our setup is modelled on the situation of base change along a quasi-compact quasi-separated map of schemes. Our results will generalise those in Section~8 of \cite{StevensonActions} to situations more general than localisations.

\begin{notation}
In situations where we wish to distinguish via notation rather than context in which category a tensor product is being taken we will use subscripts e.g., $\otimes_\mcS$ or $\otimes_\mcT$, to be completely clear, and similarly for the support and tensor units.
\end{notation}


Let us denote by $F^c$ the restriction of $F$ to compacts i.e.,
\begin{displaymath}
F^c \colon \mcS^c \to \mcT^c
\end{displaymath}
which exists by assumption. We obtain a spectral (i.e.\ quasi-compact) map of spectral spaces
\begin{displaymath}
\Spc(F^c) = f \colon \Spc \mcT^c \to \Spc \mcS^c
\end{displaymath}
and we know, by \cite{BaSpec}*{Proposition~3.6}, that $\supp Fs = f^{-1}\supp s$ for all $s\in \mcS^c$. In particular, $Fs = 0$ if and only if $f^{-1}\supp s = \varnothing$.

As $f\colon \Spc \mcT^c \to \Spc \mcS^c$ is spectral $f^{-1}$ sends Thomason subsets to Thomason subsets. This leads to the following observation.

\begin{lem}\label{lem_thomsubset_gen}
Let $\mathcal{V}$ be a Thomason subset of $\Spc \mcS^c$. There are equalities of localising ideals of $\mcT$
\begin{displaymath}
\langle F\GGamma_\mathcal{V}\mcS\rangle_\otimes = \langle F\mcS^c_\mathcal{V} \rangle_\otimes = \GGamma_{f^{-1}\mathcal{V}}\mcT.
\end{displaymath}
\end{lem}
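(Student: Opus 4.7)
The plan is to treat the two equalities separately. The first equality is essentially formal: it uses only that $F$ is exact, monoidal, and coproduct-preserving. The second equality is the substantive one and will be obtained by computing supports and invoking Balmer's classification of thick tensor ideals on $\mcT^c$.

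For the first equality $\langle F\GGamma_\mathcal{V}\mcS\rangle_\otimes = \langle F\mcS^c_\mathcal{V}\rangle_\otimes$, the inclusion $\supseteq$ is immediate since $\mcS^c_\mathcal{V} \cie \GGamma_\mathcal{V}\mcS$. For $\cie$, observe that since $F$ commutes with shifts, triangles, coproducts, and tensor products, the preimage under $F$ of any localising ideal of $\mcT$ is a localising ideal of $\mcS$. Applying this to $\langle F\mcS^c_\mathcal{V}\rangle_\otimes$, we get a localising ideal of $\mcS$ containing $\mcS^c_\mathcal{V}$, hence containing $\GGamma_\mathcal{V}\mcS$; that is, $F\GGamma_\mathcal{V}\mcS \cie \langle F\mcS^c_\mathcal{V}\rangle_\otimes$, and the inclusion of generated localising ideals follows.

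For the second equality $\langle F\mcS^c_\mathcal{V}\rangle_\otimes = \GGamma_{f^{-1}\mathcal{V}}\mcT$, I would proceed by showing that the thick tensor ideal of $\mcT^c$ generated by $F\mcS^c_\mathcal{V}$ is exactly $\mcT^c_{f^{-1}\mathcal{V}}$, after which passing to localising ideals yields the result by definition of $\GGamma_{f^{-1}\mathcal{V}}\mcT$. Let $\mathcal{J}$ denote the thick tensor ideal of $\mcT^c$ generated by $F\mcS^c_\mathcal{V}$. By the formula $\supp F s = f^{-1}\supp s$ recalled just before the statement, every object of $F\mcS^c_\mathcal{V}$ has support contained in $f^{-1}\mathcal{V}$, so $\mathcal{J} \cie \mcT^c_{f^{-1}\mathcal{V}}$. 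Conversely, writing $\mathcal{V} = \bigcup_i V_i$ with $V_i$ closed with quasi-compact complement, each $V_i$ is the support of some $s_i\in \mcS^c_\mathcal{V}$, so
\begin{displaymath}
\bigcup_i \supp F s_i = \bigcup_i f^{-1} V_i = f^{-1}\mathcal{V},
\end{displaymath}
and this identifies the Thomason subset of $\Spc \mcT^c$ associated to $\mathcal{J}$ under Balmer's bijection as $f^{-1}\mathcal{V}$; hence $\mathcal{J} = \mcT^c_{f^{-1}\mathcal{V}}$.

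The main obstacle is the reverse inclusion just handled: it is what forces us to use the full strength of Balmer's classification applied to $\mcT^c$, rather than merely a support computation, in order to convert information about the supports of the images $F s_i$ into a statement about the thick tensor ideal they generate. Once this is in hand, the identification $\langle F\mcS^c_\mathcal{V}\rangle_\otimes = \langle \mathcal{J}\rangle_\otimes = \langle \mcT^c_{f^{-1}\mathcal{V}}\rangle_\otimes = \GGamma_{f^{-1}\mathcal{V}}\mcT$ is by definition.
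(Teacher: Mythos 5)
Your proposal is correct and takes essentially the same route as the paper: the second (substantive) equality is deduced from the support formula $\supp Fs = f^{-1}\supp s$ together with Balmer's classification of thick tensor ideals of $\mcT^c$, exactly as in the paper's proof, with your choice of objects $s_i$ realising the closed pieces of $\mathcal{V}$ just making explicit why $\supp F\mcS^c_\mathcal{V} = f^{-1}\mathcal{V}$. For the first equality the paper simply cites a formal lemma from the literature, and your direct argument via preimages of localising tensor ideals under the exact, monoidal, coproduct-preserving functor $F$ is a correct unwinding of that same fact.
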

\begin{proof}
The first equality is easily checked, for instance it follows from \cite{StevensonActions}*{Lemma~3.8}. We prove the second equality. It is clear from the formula 
\begin{displaymath}
\supp Fs = f^{-1}\supp s
\end{displaymath}
for $s\in \mcS^c$ that $F\mcS^c_\mathcal{V} \cie \mcT^c_{f^{-1}\mathcal{V}}$ and so $\langle F\mcS^c_\mathcal{V} \rangle_\otimes \cie \GGamma_{f^{-1}\mathcal{V}}\mcT$. In fact, one even sees from the support formula that $\supp F\mcS^c_\mathcal{V} = f^{-1}\mathcal{V}$. By Balmer's classification result \cite{BaSpec}*{Theorem~4.10} we thus deduce that the smallest localising tensor ideal containing $F\mcS^c_\mathcal{V}$ contains $\mcT^c_{f^{-1}\mathcal{V}}$ and hence contains $\GGamma_{f^{-1}\mathcal{V}}\mcT$.
\end{proof}

\begin{prop}\label{prop_idempotent_basechange}
Let $\mathcal{V}$ be a Thomason subset of $\Spc \mcS^c$. Then there are natural isomorphisms of Rickard idempotents
\begin{displaymath}
F(\GGamma_\mathcal{V}\mathbf{1}_\mcS) \iso \GGamma_{f^{-1}\mathcal{V}}\mathbf{1}_\mcT \quad \text{and} \quad F(L_\mathcal{V}\mathbf{1}_\mcS) \iso L_{f^{-1}\mathcal{V}}\mathbf{1}_\mcT.
\end{displaymath}
\end{prop}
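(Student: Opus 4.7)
The plan is to reduce everything to the uniqueness of the Rickard triangle on the tensor unit. First I would apply the (triangulated, coproduct-preserving, monoidal) functor $F$ to the defining triangle
\begin{displaymath}
\GGamma_\mathcal{V}\mathbf{1}_\mcS \to \mathbf{1}_\mcS \to L_\mathcal{V}\mathbf{1}_\mcS \to \Sigma\GGamma_\mathcal{V}\mathbf{1}_\mcS
\end{displaymath}
to produce a triangle
\begin{displaymath}
F\GGamma_\mathcal{V}\mathbf{1}_\mcS \to \mathbf{1}_\mcT \to FL_\mathcal{V}\mathbf{1}_\mcS \to \Sigma F\GGamma_\mathcal{V}\mathbf{1}_\mcS
\end{displaymath}
in $\mcT$, where I identify $F\mathbf{1}_\mcS$ with $\mathbf{1}_\mcT$ via the monoidal structure. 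The strategy is then to show that this triangle has the two defining properties of the Rickard triangle for the smashing subcategory $\GGamma_{f^{-1}\mathcal{V}}\mcT$ and invoke uniqueness.

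For the first property, $F\GGamma_\mathcal{V}\mathbf{1}_\mcS$ lies in the essential image of $F$ restricted to $\GGamma_\mathcal{V}\mcS$, which in turn is contained in the tensor ideal $\langle F\GGamma_\mathcal{V}\mcS\rangle_\otimes = \GGamma_{f^{-1}\mathcal{V}}\mcT$ by Lemma~\ref{lem_thomsubset_gen}. For the second property I would prove that $FL_\mathcal{V}\mathbf{1}_\mcS$ is tensor-orthogonal to $\GGamma_{f^{-1}\mathcal{V}}\mcT$. The full subcategory
\begin{displaymath}
\{X \in \mcT \mid FL_\mathcal{V}\mathbf{1}_\mcS \otimes_\mcT X = 0\}
\end{displaymath}
is localising and tensor-closed, so by Lemma~\ref{lem_thomsubset_gen} it suffices to check it contains $F s$ for $s\in \mcS^c_\mathcal{V}$. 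This follows by monoidality:
\begin{displaymath}
FL_\mathcal{V}\mathbf{1}_\mcS \otimes_\mcT Fs \iso F(L_\mathcal{V}\mathbf{1}_\mcS \otimes_\mcS s) = F(0) = 0,
\end{displaymath}
since $s\in \GGamma_\mathcal{V}\mcS$ is annihilated by $L_\mathcal{V}\mathbf{1}_\mcS$.

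To conclude I would tensor the triangle $F\GGamma_\mathcal{V}\mathbf{1}_\mcS \to \mathbf{1}_\mcT \to FL_\mathcal{V}\mathbf{1}_\mcS$ with $\GGamma_{f^{-1}\mathcal{V}}\mathbf{1}_\mcT$. The third term vanishes by the previous paragraph, yielding an isomorphism $F\GGamma_\mathcal{V}\mathbf{1}_\mcS \otimes_\mcT \GGamma_{f^{-1}\mathcal{V}}\mathbf{1}_\mcT \iso \GGamma_{f^{-1}\mathcal{V}}\mathbf{1}_\mcT$. On the other hand $F\GGamma_\mathcal{V}\mathbf{1}_\mcS \in \GGamma_{f^{-1}\mathcal{V}}\mcS$ is fixed by the acyclisation, so the left-hand side is canonically $F\GGamma_\mathcal{V}\mathbf{1}_\mcS$, and a natural isomorphism $F\GGamma_\mathcal{V}\mathbf{1}_\mcS \iso \GGamma_{f^{-1}\mathcal{V}}\mathbf{1}_\mcT$ results. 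The second isomorphism $FL_\mathcal{V}\mathbf{1}_\mcS \iso L_{f^{-1}\mathcal{V}}\mathbf{1}_\mcT$ then follows from the $3\times 3$ lemma (or simply functoriality of cones) applied to the resulting map of triangles from the image of $F$ applied to the $\mcS$-triangle on $\mathbf{1}$ to the $\mcT$-triangle on $\mathbf{1}$.

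The main obstacle I anticipate is being careful that the isomorphisms obtained are natural and match the tautological structure maps, rather than just existing. This is essentially bookkeeping, relying on the fact that the Rickard triangle is unique up to unique isomorphism under the identity of $\mathbf{1}$, so that once the two outer terms are identified with objects of the correct subcategory and its right orthogonal respectively, the comparison of the two triangles on $\mathbf{1}_\mcT$ is forced.
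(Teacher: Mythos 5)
Your proposal is correct and follows essentially the same route as the paper: apply $F$ to the idempotent (localisation) triangle on $\mathbf{1}_\mcS$, use Lemma~\ref{lem_thomsubset_gen} together with a ``localising tensor ideal'' argument to place the two outer terms in $\GGamma_{f^{-1}\mathcal{V}}\mcT$ and its orthogonal respectively, and conclude by uniqueness of localisation triangles; the paper runs the mirror-image version of your ideal argument (the subcategory on which the tensored counit $F(\varepsilon)$ is invertible rather than the subcategory killed by $F(L_\mathcal{V}\mathbf{1}_\mcS)$), but the content is the same. The only blemish is the typo $F(\GGamma_\mathcal{V}\mathbf{1}_\mcS)\in \GGamma_{f^{-1}\mathcal{V}}\mcS$, which should read $\GGamma_{f^{-1}\mathcal{V}}\mcT$.
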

\begin{proof}
The idempotent $\GGamma_\mathcal{V}\mathbf{1}_\mcS$ comes equipped with a morphism $\GGamma_\mathcal{V}\mathbf{1}_\mcS \stackrel{\varepsilon}{\to} \mathbf{1}_\mcS$ giving the counit of the adjunction corresponding to the acyclisation functor with respect to $\GGamma_\mathcal{V}\mcS$. Applying the monoidal functor $F$ yields
\begin{displaymath}
\varepsilon' = (F(\GGamma_\mathcal{V}\mathbf{1}_\mcS) \stackrel{F(\varepsilon)}{\to} F(\mathbf{1}_\mcS) \stackrel{\sim}{\to} \mathbf{1}_\mcT)
\end{displaymath}
from which we obtain, by tensoring, a natural transformation with component at $X\in \mcT$ $F(\GGamma_\mathcal{V}\mathbf{1}_\mcS)\otimes X \to X$ which we also denote by $\varepsilon'$. We consider the full subcategory $\mathcal{M}$ of $\mcT$ defined as follows
\begin{displaymath}
\mathcal{M} = \{X\in \mcT \; \vert \; \varepsilon'_X \; \text{is an isomorphism} \}.
\end{displaymath}
By naturality of $\varepsilon'$ and its compatibility with coproducts and suspension we deduce immediately that $\mathcal{M}$ is a localising subcategory of $\mcT$. Moreover, given $X\in \mathcal{M}$ and $Y\in \mcT$ commutativity of the square
\begin{displaymath}
\xymatrix{
F(\GGamma_\mathcal{V}\mathbf{1}_\mcS) \otimes (X\otimes Y) \ar[r]^-{\varepsilon'_{X\otimes Y}} \ar[d]^-{\wr} & X\otimes Y \ar@{=}[d] \\
(F(\GGamma_\mathcal{V}\mathbf{1}_\mcS) \otimes X)\otimes Y \ar[r]_-{\varepsilon'_{X}\otimes Y} & X\otimes Y
}
\end{displaymath}
shows that $\mathcal{M}$ is a tensor ideal. Since $F$ is monoidal we have $F(\GGamma_\mathcal{V}\mcS)\cie \mathcal{M}$ and so by the lemma $\GGamma_{f^{-1}\mathcal{V}}\mcT \cie \mathcal{M}$. Thus
\begin{displaymath}
F(\GGamma_\mathcal{V}\mathbf{1}_\mcS) \iso \GGamma_{f^{-1}\mathcal{V}}\mathbf{1}_\mcT \otimes F(\GGamma_\mathcal{V}\mathbf{1}_\mcS) \iso \GGamma_{f^{-1}\mathcal{V}}\mathbf{1}_\mcT
\end{displaymath}
and the corresponding isomorphism for the localisation functors follows from uniqueness of localisation triangles. 
\end{proof}

\begin{cor}
Let $\mathcal{V}$ be a Thomason subset of $\Spc \mcS^c$. Then
\begin{displaymath}
\supp F(\GGamma_\mathcal{V}\mathbf{1}_\mcS) = f^{-1}\mathcal{V} \intersec \Vis \mcT^c \quad \text{and} \quad \supp F(L_\mathcal{V}\mathbf{1}_\mcS) = f^{-1}(\Spc\mcS^c\setminus \mathcal{V}) \intersec \Vis \mcT^c.
\end{displaymath}
\end{cor}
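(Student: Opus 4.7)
The plan is to reduce the statement, via Proposition \ref{prop_idempotent_basechange}, to a general computation of the support of the Rickard idempotents $\GGamma_\mathcal{W}\mathbf{1}$ and $L_\mathcal{W}\mathbf{1}$ attached to a Thomason subset $\mathcal{W}$ of the spectrum of compacts of a rigidly-compactly generated tensor triangulated category. Concretely, the proposition supplies isomorphisms $F(\GGamma_\mathcal{V}\mathbf{1}_\mcS)\iso \GGamma_{f^{-1}\mathcal{V}}\mathbf{1}_\mcT$ and $F(L_\mathcal{V}\mathbf{1}_\mcS)\iso L_{f^{-1}\mathcal{V}}\mathbf{1}_\mcT$, so it suffices to show that $\supp \GGamma_\mathcal{W}\mathbf{1}_\mcT = \mathcal{W}\cap \Vis\mcT^c$ and $\supp L_\mathcal{W}\mathbf{1}_\mcT = (\Spc\mcT^c\setminus \mathcal{W})\cap\Vis\mcT^c$ for any Thomason subset $\mathcal{W}$; the corollary then falls out by taking $\mathcal{W}=f^{-1}\mathcal{V}$ and using $f^{-1}(\Spc\mcS^c\setminus\mathcal{V})=\Spc\mcT^c\setminus f^{-1}\mathcal{V}$.

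To compute $\supp\GGamma_\mathcal{W}\mathbf{1}_\mcT$, I fix a visible point $x$ and invoke Remark~\ref{rem_goodchoice} to write $\GGamma_x\mathbf{1}_\mcT = \GGamma_{\mathcal{V}(x)}\mathbf{1}_\mcT\otimes L_{\mathcal{Z}(x)}\mathbf{1}_\mcT$. If $x\in\mathcal{W}$, then $\mathcal{V}(x)\cie\mathcal{W}$ (Thomason subsets are specialisation closed), so the idempotent identities yield $\GGamma_{\mathcal{V}(x)}\mathbf{1}_\mcT\otimes\GGamma_\mathcal{W}\mathbf{1}_\mcT\iso\GGamma_{\mathcal{V}(x)}\mathbf{1}_\mcT$, giving $\GGamma_x\mathbf{1}_\mcT\otimes\GGamma_\mathcal{W}\mathbf{1}_\mcT\iso\GGamma_x\mathbf{1}_\mcT\neq 0$ by visibility of $x$. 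If $x\notin\mathcal{W}$, I observe that $\mathcal{W}\cie \mathcal{Z}(x)$: for any $y\in\mathcal{W}$ we have $\mathcal{V}(y)\cie\mathcal{W}$ so $x\notin\mathcal{V}(y)$, i.e.\ $y\in\mathcal{Z}(x)$. Hence $\GGamma_\mathcal{W}\mcT\cie\GGamma_{\mathcal{Z}(x)}\mcT$, and therefore $L_{\mathcal{Z}(x)}\mathbf{1}_\mcT\otimes\GGamma_\mathcal{W}\mathbf{1}_\mcT=0$, which gives $\GGamma_x\mathbf{1}_\mcT\otimes\GGamma_\mathcal{W}\mathbf{1}_\mcT=0$.

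For $\supp L_\mathcal{W}\mathbf{1}_\mcT$, I tensor the localisation triangle $\GGamma_\mathcal{W}\mathbf{1}_\mcT\to\mathbf{1}_\mcT\to L_\mathcal{W}\mathbf{1}_\mcT$ with $\GGamma_x\mathbf{1}_\mcT$. When $x\in\mathcal{W}$ the first map becomes an isomorphism by the computation above, so $\GGamma_x\mathbf{1}_\mcT\otimes L_\mathcal{W}\mathbf{1}_\mcT=0$; when $x\notin\mathcal{W}$ the first term vanishes, so the second map is an isomorphism and $\GGamma_x\mathbf{1}_\mcT\otimes L_\mathcal{W}\mathbf{1}_\mcT\iso \GGamma_x\mathbf{1}_\mcT\neq 0$. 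Assembling these two cases gives the desired equalities for $\supp\GGamma_\mathcal{W}\mathbf{1}_\mcT$ and $\supp L_\mathcal{W}\mathbf{1}_\mcT$.

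There is no real obstacle here; the only subtle point is the inclusion $\mathcal{W}\cie\mathcal{Z}(x)$ for $x\notin\mathcal{W}$, which is precisely where specialisation closure of Thomason subsets is used. Everything else is manipulation of the standard tensor-idempotent identities and naturality of the localisation triangle.
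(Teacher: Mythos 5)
Your argument is correct and follows the route the paper intends: the corollary is immediate from Proposition~\ref{prop_idempotent_basechange} once one knows $\supp \GGamma_{\mathcal{W}}\mathbf{1}_\mcT = \mathcal{W}\intersec\Vis\mcT^c$ and $\supp L_{\mathcal{W}}\mathbf{1}_\mcT = (\Spc\mcT^c\setminus\mathcal{W})\intersec\Vis\mcT^c$ for a Thomason subset $\mathcal{W}$, a formula the paper takes as known (it is \cite{StevensonActions}*{Proposition~5.7}, cited for exactly this purpose in the proof of Proposition~\ref{prop_point_fibre}) and which you instead verify directly by the standard idempotent manipulations. Your two-case computation is sound; the only point worth making explicit is that in the case $x\in\mathcal{W}$ the nonvanishing $\GGamma_x\mathbf{1}_\mcT\neq 0$ rests on the standard fact that $\GGamma_{\mathcal{V}}\mathbf{1}\otimes L_{\mathcal{W}'}\mathbf{1}\neq 0$ whenever $\mathcal{V}\nsubseteq\mathcal{W}'$ (cf.\ \cite{BaRickard}), which is exactly what visibility of $x$ provides.
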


\begin{prop}\label{prop_point_fibre}
Let $y$ be a visible point of $\Spc \mcS^c$. Then 
\begin{displaymath}
\supp F(\GGamma_y\mathbf{1}_\mcS) = f^{-1}(y)\intersec \Vis \mcT^c.
\end{displaymath}
Moreover, $F(\GGamma_y\mathbf{1}_\mcS)$ is zero if and only if the fibre over $y$ is empty.
\end{prop}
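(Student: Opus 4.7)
The plan is to reduce both assertions to Proposition~\ref{prop_idempotent_basechange} by unpacking the definition of $\GGamma_y\mathbf{1}_\mcS$. Using Remark~\ref{rem_goodchoice}, I would pick Thomason subsets $\mathcal{V},\mathcal{W} \cie \Spc\mcS^c$ with $\mathcal{V}\setminus\mathcal{W} = \{y\}$; concretely one may take $\mathcal{V} = \mathcal{V}(y)$ and $\mathcal{W} = \mathcal{Z}(y)$. Since $F$ is monoidal, Proposition~\ref{prop_idempotent_basechange} immediately gives
\begin{displaymath}
F(\GGamma_y\mathbf{1}_\mcS) \iso F(\GGamma_\mathcal{V}\mathbf{1}_\mcS) \otimes F(L_\mathcal{W}\mathbf{1}_\mcS) \iso \GGamma_{f^{-1}\mathcal{V}}\mathbf{1}_\mcT \otimes L_{f^{-1}\mathcal{W}}\mathbf{1}_\mcT,
\end{displaymath}
so the task reduces to analysing this explicit tensor product of idempotents in $\mcT$.

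For the support equality, I would test membership of an arbitrary $x \in \Vis\mcT^c$ in $\supp F(\GGamma_y\mathbf{1}_\mcS)$ by computing $\GGamma_x\mathbf{1}_\mcT \otimes \GGamma_{f^{-1}\mathcal{V}}\mathbf{1}_\mcT \otimes L_{f^{-1}\mathcal{W}}\mathbf{1}_\mcT$. The key combinatorial input is a dichotomy: for any Thomason $\mathcal{U} \cie \Spc\mcT^c$ and visible $x$, the product $\GGamma_x\mathbf{1}_\mcT \otimes \GGamma_\mathcal{U}\mathbf{1}_\mcT$ equals $\GGamma_x\mathbf{1}_\mcT$ when $x \in \mathcal{U}$ and vanishes when $x \notin \mathcal{U}$. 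The first case holds because specialisation-closure of $\mathcal{U}$ forces $\mathcal{V}(x) \cie \mathcal{U}$, placing $\GGamma_x\mathbf{1}_\mcT = \GGamma_{\mathcal{V}(x)}\mathbf{1}_\mcT \otimes L_{\mathcal{Z}(x)}\mathbf{1}_\mcT$ inside the tensor ideal $\GGamma_\mathcal{U}\mcT$; the second because $\mathcal{U} \cie \mathcal{Z}(x)$ in that case, so $\GGamma_\mathcal{U}\mathbf{1}_\mcT$ is killed by $L_{\mathcal{Z}(x)}\mathbf{1}_\mcT$. The analogous dichotomy for $L_\mathcal{U}\mathbf{1}_\mcT$ then follows from the defining localisation triangle. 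Combining these, the triple tensor product is nonzero precisely when $x \in f^{-1}\mathcal{V}$ and $x \notin f^{-1}\mathcal{W}$, equivalently $f(x) \in \mathcal{V}\setminus\mathcal{W} = \{y\}$, giving the desired equality $\supp F(\GGamma_y\mathbf{1}_\mcS) = f^{-1}(y) \intersec \Vis\mcT^c$.

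For the ``moreover'' part, I would argue that $\GGamma_{f^{-1}\mathcal{V}}\mathbf{1}_\mcT \otimes L_{f^{-1}\mathcal{W}}\mathbf{1}_\mcT$ is zero if and only if $\GGamma_{f^{-1}\mathcal{V}}\mathbf{1}_\mcT$ already lies in $\GGamma_{f^{-1}\mathcal{W}}\mcT$ (from the localisation triangle), which since $\GGamma_{f^{-1}\mathcal{V}}\mathbf{1}_\mcT$ generates $\GGamma_{f^{-1}\mathcal{V}}\mcT$ as a localising ideal is equivalent to $\GGamma_{f^{-1}\mathcal{V}}\mcT \cie \GGamma_{f^{-1}\mathcal{W}}\mcT$, and upon intersecting with compacts to $\mcT^c_{f^{-1}\mathcal{V}} \cie \mcT^c_{f^{-1}\mathcal{W}}$. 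Balmer's classification \cite{BaSpec}*{Theorem~4.10} converts this into the set-theoretic statement $f^{-1}\mathcal{V} \cie f^{-1}\mathcal{W}$, i.e.\ $f^{-1}(y) = f^{-1}(\mathcal{V}\setminus\mathcal{W}) = \varnothing$, which is what was needed. The main technical hurdle throughout is verifying the idempotent dichotomy sketched above, since one cannot invoke a generic ``$\supp(A\otimes B) = \supp A \cap \supp B$'' formula outside the compact setting; once that dichotomy is established, the rest is a formal assembly of Proposition~\ref{prop_idempotent_basechange} and Balmer's classification.
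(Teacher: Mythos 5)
Your overall route is the same as the paper's: reduce via Proposition~\ref{prop_idempotent_basechange} to the idempotent $\GGamma_{f^{-1}\mathcal{V}}\mathbf{1}_\mcT \otimes L_{f^{-1}\mathcal{W}}\mathbf{1}_\mcT$, compute its support, and prove the ``moreover'' separately by showing nonvanishing is equivalent to $f^{-1}\mathcal{V}\nsubseteq f^{-1}\mathcal{W}$ via Balmer's classification --- you are right that this last part cannot be read off from the support equality, since the support need not detect vanishing at this level of generality. The only real divergence is that where the paper cites the Corollary following Proposition~\ref{prop_idempotent_basechange} together with \cite{StevensonActions}*{Proposition~5.7(4)}, you rederive the needed support computation by hand through a pointwise dichotomy for $\GGamma_x\mathbf{1}_\mcT\otimes\GGamma_\mathcal{U}\mathbf{1}_\mcT$ and $\GGamma_x\mathbf{1}_\mcT\otimes L_\mathcal{U}\mathbf{1}_\mcT$.

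There is, however, a genuine flaw in how you justify that dichotomy, and it sits exactly at the point this paper is sensitive to. Twice you treat the closure of a point as a Thomason subset: you propose $\mathcal{V}=\mathcal{V}(y)$ at the outset, and in the case $x\in\mathcal{U}$ you write $\GGamma_x\mathbf{1}_\mcT = \GGamma_{\mathcal{V}(x)}\mathbf{1}_\mcT\otimes L_{\mathcal{Z}(x)}\mathbf{1}_\mcT$. In the definition of visibility used here (deliberately weaker than that of \cite{BaRickard}; see Remark~\ref{rem_vis_defn}) one is only guaranteed \emph{some} Thomason $\mathcal{V}$ with $\mathcal{V}\setminus\mathcal{W}=\{x\}$; the closure $\mathcal{V}(x)$ itself need not be Thomason, so $\GGamma_{\mathcal{V}(x)}\mathbf{1}$ is simply not defined. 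This is not a hypothetical concern: in the intended application to absolutely flat rings every point is closed but not Thomason, so your justification of the case $x\in\mathcal{U}$ fails precisely where the proposition is to be used. The conclusion of the dichotomy is nevertheless correct and the repair is short: by Remark~\ref{rem_goodchoice} choose a Thomason $\mathcal{V}'$ with $\mathcal{V}'\setminus\mathcal{Z}(x)=\{x\}$; if $x\in\mathcal{U}$ then $\mathcal{V}'\intersec\mathcal{U}$ is again Thomason (Thomason subsets are stable under finite intersections) and still satisfies $(\mathcal{V}'\intersec\mathcal{U})\setminus\mathcal{Z}(x)=\{x\}$, so by independence of the choice (\cite{BaRickard}*{Corollary~7.5}) we have $\GGamma_x\mathbf{1}_\mcT\iso\GGamma_{\mathcal{V}'\intersec\mathcal{U}}\mathbf{1}_\mcT\otimes L_{\mathcal{Z}(x)}\mathbf{1}_\mcT\in\GGamma_\mathcal{U}\mcT$, whence $\GGamma_x\mathbf{1}_\mcT\otimes\GGamma_\mathcal{U}\mathbf{1}_\mcT\iso\GGamma_x\mathbf{1}_\mcT$; similarly, at the start take $\mathcal{W}=\mathcal{Z}(y)$ but leave $\mathcal{V}$ an arbitrary Thomason witness rather than $\mathcal{V}(y)$. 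You should also record why $\GGamma_x\mathbf{1}_\mcT\neq 0$ for visible $x$ (the same $\mathcal{V}'\nsubseteq\mathcal{Z}(x)$ argument you use in the ``moreover''), since the nonvanishing half of your support computation needs it. With these corrections your argument goes through and agrees with the paper's.
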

\begin{proof}
As $y$ is visible we can find Thomason subsets $\mathcal{V}$ and $\mathcal{W}$ of $\Spc \mcS^c$ defining $\GGamma_y\mathbf{1}_\mcS$ i.e., our subsets satisfy $\mathcal{V}\setminus \mathcal{W} = \{y\}$ and $\GGamma_y\mathbf{1}_\mcS = \GGamma_\mathcal{V}\mathbf{1}_\mcS \otimes L_\mathcal{W}\mathbf{1}_\mcS$. Since $F$ is monoidal we see, using Proposition \ref{prop_idempotent_basechange},
\begin{displaymath}
F(\GGamma_y\mathbf{1}_\mcS) \iso F(\GGamma_\mathcal{V}\mathbf{1}_\mcS) \otimes F(L_\mathcal{W}\mathbf{1}_\mcS) \iso \GGamma_{f^{-1}\mathcal{V}}\mathbf{1}_\mcT \otimes L_{f^{-1}\mathcal{W}}\mathbf{1}_\mcT.
\end{displaymath}
Applying the above Corollary and \cite{StevensonActions}*{Proposition~5.7(4)} then yields the desired equality
\begin{displaymath}
\supp F(\GGamma_y\mathbf{1}_\mcS) 
 = f^{-1}\mathcal{V} \intersec (\Spc\mcT^c \setminus f^{-1}\mathcal{W}) \intersec \Vis \mcT^c = f^{-1}(y) \intersec \Vis\mcT^c.
\end{displaymath}
It follows $F(\GGamma_y\mathbf{1}_\mcS)$ is non-zero iff the fibre over $y$ is non-empty. Indeed, $F(\GGamma_y\mathbf{1}_\mcS)$ is non-zero if and only if 
\begin{displaymath}
\GGamma_{f^{-1}\mathcal{V}}\mathbf{1}_\mcT \otimes L_{f^{-1}\mathcal{W}}\mathbf{1}_\mcT \neq 0 \quad \text{i.e.} \quad f^{-1}\mathcal{V} \nsubseteq f^{-1}\mathcal{W},
\end{displaymath}
which occurs precisely when $f^{-1}(y)\neq \varnothing$.
\end{proof}

\begin{rem}\label{rem_vis_idemp}
Notice that in the special case $f^{-1}(y) = x$ the point $x$ is also visible and we have an isomorphism
\begin{displaymath}
F(\GGamma_y\mathbf{1}_\mcS) \iso \GGamma_x\mathbf{1}_\mcT.
\end{displaymath}
\end{rem}

\subsection{Spectra and smashing localisations}\label{sec_smashing}
We now make some observations concerning the spectra of quotients by smashing ideals. Our point is to illustrate that there is a support-theoretic obstruction to being the right orthogonal of a smashing ideal; the results here form the basis for verifying the telescope conjecture for certain rings in Section~\ref{sec_goodbehaviour}.

Let $\mcT$ be a rigidly-compactly generated tensor triangulated category and $\mcS$ a smashing ideal of $\mcT$. We recall this means there is a localisation sequence
\begin{displaymath}
\xymatrix{
\mcS \ar[r]<0.5ex>^-{I_*} \ar@{<-}[r]<-0.5ex>_-{I^!} & \mcT \ar[r]<0.5ex>^-{J^*} \ar@{<-}[r]<-0.5ex>_-{J_*} & \mcS^\perp
}
\end{displaymath}
where $\mcS$ is a localising tensor ideal of $\mcT$ and $I^!$ preserves coproducts. It follows that $J_*$ also preserves coproducts and $\mcS^\perp$ is also a localising tensor ideal of $\mcT$.

We thus see $\mcS^\perp$ is a compactly generated tensor triangulated category and $J^*$ is an exact monoidal functor which sends compact objects to compact objects. Moreover, there are tensor idempotents $\GGamma_\mcS\mathbf{1}$ and $L_\mcS\mathbf{1}$ such that
\begin{displaymath}
\GGamma_\mcS\mathbf{1}\otimes(-) \iso I_*I^! \quad \text{and} \quad L_\mcS\mathbf{1}\otimes(-) \iso J_*J^*.
\end{displaymath}
Let us denote the restriction of $J^*$ to compacts by $j^*\colon \mcT^c \to (\mcS^\perp)^c$ and the associated spectral map by
\begin{displaymath}
j\colon \Spc (\mcS^\perp)^c \to \Spc \mcT^c.
\end{displaymath}

By \cite{KrCQ}*{Theorem~11.1} the functor $j^*$ is essentially surjective up to direct summands. As an immediate consequence we deduce the following lemma.

\begin{lem}\label{lem_inj}
The map $j\colon \Spc (\mcS^\perp)^c \to \Spc \mcT^c$ is injective.
\end{lem}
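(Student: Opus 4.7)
The plan is to factor $j^*$ through the Verdier quotient by its kernel, regarded as a thick tensor ideal of $\mcT^c$, and then appeal to standard results from Balmer's spectrum theory. First I would identify the kernel of $j^*$ on objects as the thick tensor ideal $\mathcal{J} = \mcT^c \intersec \mcS$ of $\mcT^c$ (it is thick as an intersection of thick subcategories, and a tensor ideal because $\mcS$ is one). This produces a factorisation
\[
\mcT^c \repi \mcT^c/\mathcal{J} \xrightarrow{\bar{\jmath}^*} (\mcS^\perp)^c.
\]

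Next I would invoke Krause's Theorem~11.1, which gives rather more than the essential surjectivity up to summands stated just before the lemma: it identifies $\bar{\jmath}^*$ as fully faithful. Since $(\mcS^\perp)^c$ is idempotent complete (being the compact objects in a triangulated category with coproducts), this exhibits $(\mcS^\perp)^c$ as the idempotent completion of $\mcT^c/\mathcal{J}$. Two standard facts from Balmer's theory then close the argument: the Balmer spectrum is invariant under idempotent completion, and for any thick tensor ideal $\mathcal{J}$ of $\mcT^c$ there is a natural identification $\Spc(\mcT^c/\mathcal{J}) \iso \Spc\mcT^c \setminus \supp\mathcal{J}$. Combining them, $j$ corresponds to the inclusion of this subspace into $\Spc\mcT^c$, and injectivity is immediate.

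The main obstacle is justifying the identification of $(\mcS^\perp)^c$ with the idempotent completion of the Verdier quotient, but that is precisely the content of Krause's theorem; the remainder of the argument is formal. A more hands-on alternative would be to argue directly that essential surjectivity up to direct summands of a tt-functor $F$ forces $\Spc(F)$ to be injective, by analysing how summand decompositions of an element separating two primes interact with primality; however, the route via idempotent completion is both cleaner and carries the extra information that $j$ realises $\Spc(\mcS^\perp)^c$ as an open subspace of $\Spc\mcT^c$, which one anticipates will be useful for the promised treatment of the telescope conjecture.
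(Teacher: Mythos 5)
Your main route breaks at the appeal to Krause's theorem. What \cite{KrCQ}*{Theorem~11.1} provides (and what the paper uses) is that $j^*$ is a \emph{cohomological} quotient functor; in particular every object of $(\mcS^\perp)^c$ is a direct factor of $j^*t$ for some $t\in\mcT^c$. It does \emph{not} assert that the induced functor $\bar{\jmath}^*\colon \mcT^c/\mathcal{J} \to (\mcS^\perp)^c$, with $\mathcal{J}=\mcS\cap\mcT^c$, is fully faithful, and this is not a harmless omission: full faithfulness together with the summand-density you quote would exhibit $(\mcS^\perp)^c$ as the idempotent completion of $\mcT^c/\mathcal{J}$, and comparing with the Neeman--Thomason description of the compacts of $\mcT/\langle\mathcal{J}\rangle$ one finds that the canonical comparison $\mcT/\langle\mathcal{J}\rangle \to \mcS^\perp$ would then be an equivalence on compacts and hence an equivalence, i.e.\ $\mcS=\langle\mcS\cap\mcT^c\rangle$. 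So your key step is equivalent to the smashing subcategory being compactly generated --- exactly the telescope-type statement that cannot be assumed at this point (it is what the paper later proves in the special situation at hand, so assuming it here is circular in spirit), and which is false in general: in Keller's example of a smashing $\mcS\subseteq\D(R)$ with $\mcS\cap\D^{\mathrm{perf}}(R)=0$ but $\mcS\neq 0$, your identification would give $(\mcS^\perp)^c\simeq\D^{\mathrm{perf}}(R)$ and then $\mcS=0$. Relatedly, the promised ``extra information'' is off: $\Spc\mcT^c\setminus\supp\mathcal{J}$ is the complement of a Thomason subset, not open in general, and Lemmas~\ref{lem_im_j} and \ref{lem_j_homeom} identify $\im j$ as $\supp\mcS^\perp$, which Proposition~\ref{prop_procons} shows is merely proconstructible.

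Your ``hands-on alternative'' is in fact the paper's own proof: it adapts \cite{BaSpec}*{Corollary~3.8} (essential surjectivity implies injectivity of $\Spc$) by closing under direct summands as well as isomorphisms. If you pursue it, be aware that summand-density of the image alone is not a sufficient hypothesis for an arbitrary tt-functor: restriction $\mathrm{stmod}(kG)\to\mathrm{stmod}(kH)$ along a subgroup $H\leq G$ is essentially surjective up to summands (every $kH$-module is a summand of the restriction of its induction, by the Mackey formula), yet the induced map on spectra identifies $G$-conjugate points of $\Proj H^{\bullet}(H,k)$ and so is not injective whenever there is nontrivial fusion. The naive recovery $P=\{b : b \text{ a summand of } j^*a,\ a\in (j^*)^{-1}P\}$ only gives one inclusion, precisely because $b\in P$ does not force $j^*a\in P$. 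So a correct argument along these lines must genuinely use the smashing-localisation structure --- for instance the fully faithful coproduct-preserving right adjoint $J_*$, or the tensor idempotent $L_\mcS\mathbf{1}$ --- rather than the bare statement that every compact of $\mcS^\perp$ is a summand of the image of a compact of $\mcT$.
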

\begin{proof}
This is essentially \cite{BaSpec}*{Corollary~3.8} - weakening essential surjectivity to essential surjectivity up to summands does no harm, one just needs to close under summands as well as isomorphisms in the argument given there.
\end{proof}

In order to simplify the discussion, and since it is enough for our intended application, we will assume every point of $\Spc \mcT^c$ is visible. Combining the last lemma with Remark~\ref{rem_vis_idemp} we see every point of $\Spc (\mcS^\perp)^c$ is also visible.

We now identify the image of the map $j$ as a set.

\begin{lem}\label{lem_im_j}
There is an equality of sets
\begin{displaymath}
\im j = \supp \mcS^\perp.
\end{displaymath}
\end{lem}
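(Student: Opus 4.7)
The plan is to translate the condition $x \in \im j$ into the nonvanishing of the pulled-back Rickard idempotent $J^{*}(\GGamma_{x}\mathbf{1}_{\mcT})$ via Proposition~\ref{prop_point_fibre}, applied with source $\mcT$ and target $\mcS^{\perp}$ (both rigidly-compactly generated, with $J^{*}$ an exact monoidal coproduct- and compact-preserving functor). Using that $J_{*}$ is fully faithful and that $J_{*}J^{*} \iso L_{\mcS}\mathbf{1}\otimes(-)$, this reads, for visible $x$:
$x\in \im j \iff J^{*}(\GGamma_{x}\mathbf{1}_{\mcT})\neq 0 \iff L_{\mcS}\mathbf{1}\otimes \GGamma_{x}\mathbf{1}_{\mcT}\neq 0$. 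So the entire lemma reduces to showing that the last condition is equivalent to $x\in \supp \mcS^{\perp}$.

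For $\im j \cie \supp \mcS^{\perp}$: given $x\in \im j$, the object $L_{\mcS}\mathbf{1}\otimes \GGamma_{x}\mathbf{1}_{\mcT}$ is nonzero and belongs to $\mcS^{\perp}$, since it lies in the essential image of $L_{\mcS}\mathbf{1}\otimes(-)\iso J_{*}J^{*}$. Because $\GGamma_{x}\mathbf{1}_{\mcT}$ is tensor-idempotent, being the tensor product of the two idempotents $\GGamma_{\mathcal{V}}\mathbf{1}$ and $L_{\mathcal{W}}\mathbf{1}$ defining it (see Remark~\ref{rem_goodchoice}), one more tensoring with $\GGamma_{x}\mathbf{1}_{\mcT}$ does not kill it. Hence $x$ lies in the support of this object, so $x\in \supp \mcS^{\perp}$.

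For the reverse inclusion: if some $X\in \mcS^{\perp}$ has $\GGamma_{x}\mathbf{1}_{\mcT}\otimes X\neq 0$, then since $I^{!}J_{*} = 0$ we have $\GGamma_{\mcS}\mathbf{1}\otimes X = I_{*}I^{!}X = 0$, so the localisation triangle yields $X\iso L_{\mcS}\mathbf{1}\otimes X$. Substituting gives
$0\neq \GGamma_{x}\mathbf{1}_{\mcT}\otimes X\iso \GGamma_{x}\mathbf{1}_{\mcT}\otimes L_{\mcS}\mathbf{1}\otimes X$,
which forces $L_{\mcS}\mathbf{1}\otimes \GGamma_{x}\mathbf{1}_{\mcT}\neq 0$, and thus $x\in \im j$ by the reformulation above.

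The main obstacle is conceptual rather than computational: one must recognise that the base-change formalism of Section~\ref{sec_basechange} applies to $J^{*}$ in exactly the same way it would to a geometric base change, and one must keep track of where support is being taken (objects of $\mcS^{\perp}$ have their support in $\Spc \mcT^{c}$, via $J_{*}$). Once this is straight, the argument reduces to routine idempotent bookkeeping.
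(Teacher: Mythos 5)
Your argument is correct and is essentially the paper's own proof: both reduce the statement, via Proposition~\ref{prop_point_fibre} applied to $J^*$, to the equivalence of $x\in\supp\mcS^\perp$ with the nonvanishing of $L_\mcS\mathbf{1}_\mcT\otimes\GGamma_x\mathbf{1}_\mcT\iso J_*J^*\GGamma_x\mathbf{1}_\mcT$, using $Y\iso L_\mcS\mathbf{1}_\mcT\otimes Y$ for $Y\in\mcS^\perp$. You merely spell out the two directions of that equivalence (tensor-idempotency of $\GGamma_x\mathbf{1}_\mcT$ for one, the localisation triangle for the other) which the paper leaves implicit.
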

\begin{proof}
Let $x$ be a point of $\Spc \mcT^c$. We have $x\in \supp \mcS^\perp$ if and only if there is an $X\in \mcS^\perp$ such that $\GGamma_x\mathbf{1}_\mcT \otimes X$ is non-zero. As $Y\iso L_\mcS\mathbf{1}_\mcT \otimes Y$ for all $Y\in \mcS^\perp$ we see there exists such an $X$ if and only if $L_\mcS\mathbf{1}_\mcT\otimes \GGamma_x\mathbf{1}_\mcT$ is non-zero i.e., $J^*\GGamma_x\mathbf{1}_\mcT \neq 0$. We proved in Proposition~\ref{prop_point_fibre} that $J^*\GGamma_x\mathbf{1}_\mcT \neq 0$ if and only if $j^{-1}(x)$ is non-empty.

Tracing through this chain of equivalent statements we find $x\in \supp \mcS^\perp$ if and only if there is a $y\in \Spc (\mcS^\perp)^c$ such that $j(y)=x$, which is precisely what we have claimed.
\end{proof}

\begin{lem}\label{lem_j_homeom}
The map $j$ is closed and is thus a homeomorphism onto its image $\supp \mcS^\perp$ endowed with the subspace topology.
\end{lem}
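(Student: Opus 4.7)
The plan is to show that $j$ is closed when viewed as a map into $\supp \mcS^\perp$ equipped with the subspace topology inherited from $\Spc \mcT^c$. Combined with the injectivity of Lemma~\ref{lem_inj} and the continuity of the spectral map $j$, this will yield the claimed homeomorphism onto the image.

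First I would reduce the problem to basic closed sets: every closed subset of $\Spc(\mcS^\perp)^c$ is an intersection of subsets of the form $\supp t$ for $t \in (\mcS^\perp)^c$, and since $j$ is injective it commutes with intersections on images; as arbitrary intersections of subspace-closed subsets remain subspace-closed, it suffices to show that $j(\supp t)$ is closed in $\supp \mcS^\perp$ for every compact $t \in (\mcS^\perp)^c$.

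The approach I would then take rests on the structural input recalled in the discussion above: by \cite{KrCQ}*{Theorem~11.1}, every $t \in (\mcS^\perp)^c$ is a direct summand of $J^* s$ for some $s \in \mcT^c$. Equivalently, $J^*|_{\mcT^c}$ identifies $(\mcS^\perp)^c$ with the idempotent completion of the Verdier quotient $\mcT^c / (\mcS \intersec \mcT^c)$. Combining the invariance of $\Spc$ under idempotent completion with Balmer's realisation of the spectrum of a Verdier quotient as a topological embedding of its spectrum into that of the original category (both found in \cite{BaSpec}), the map $j$ factors as
\begin{displaymath}
\Spc(\mcS^\perp)^c \xrightarrow{\sim} \Spc(\mcT^c / (\mcS \intersec \mcT^c)) \hookrightarrow \Spc \mcT^c,
\end{displaymath}
where the second arrow is a homeomorphism onto the subspace $\{x \in \Spc \mcT^c \; \vert \; \mcS \intersec \mcT^c \cie x\}$. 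By Lemma~\ref{lem_im_j} this subspace is precisely $\supp \mcS^\perp$, which completes the argument.

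The main obstacle is the identification of $(\mcS^\perp)^c$ with the idempotent completion of $\mcT^c / (\mcS \intersec \mcT^c)$, which is where the smashing hypothesis is essentially used; once this step is accepted, the closedness of $j$ onto its image---and hence the homeomorphism---follows immediately from Balmer's general framework.
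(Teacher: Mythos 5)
There is a genuine gap at the step where you claim that $J^*|_{\mcT^c}$ identifies $(\mcS^\perp)^c$ with the idempotent completion of the Verdier quotient $\mcT^c/(\mcS\intersec\mcT^c)$. What \cite{KrCQ}*{Theorem~11.1} gives (and what the paper uses) is only that $j^*\colon \mcT^c\to(\mcS^\perp)^c$ is essentially surjective up to direct summands; it does \emph{not} give full faithfulness of the induced functor $\mcT^c/(\mcS\intersec\mcT^c)\to(\mcS^\perp)^c$. For an arbitrary smashing ideal this full faithfulness fails: if it held, then $\mcS^\perp$ and the finite localisation $\mcT/\langle\mcS\intersec\mcT^c\rangle_\otimes$ would be compactly generated categories with equivalent compacts, forcing $\mcS = \langle\mcS\intersec\mcT^c\rangle_\otimes$, i.e.\ your identification is \emph{equivalent} to $\mcS$ being generated by compact objects. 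This is false in general (there are smashing localising ideals of derived categories of non-noetherian rings containing no non-zero compacts), and in the intended application it is precisely the telescope conjecture that Proposition~\ref{prop_procons} is later used to prove (Theorem~\ref{thm_telescope}), so the argument would also be circular there. Consequently the factorisation of $j$ you write down does exist (since $j^*$ kills $\mcS\intersec\mcT^c$), and the second arrow is a closed embedding by Balmer, but the first arrow $\Spc(\mcS^\perp)^c\to\Spc\bigl(\mcT^c/(\mcS\intersec\mcT^c)\bigr)$ is only known to be injective and continuous --- showing it is closed is exactly the original problem, so nothing has been gained.

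The repair is to work only with what essential surjectivity up to summands provides, which is what the paper does: your reduction to basic closed sets $\supp a$, $a\in(\mcS^\perp)^c$, is fine (and agrees with the paper), but then one replaces $a$ by $a\oplus\S a$ so that it lifts to an actual object $b\in\mcT^c$ with $j^*b\iso a$ (\cite{NeeCat}*{Proposition~4.5.11}), and computes directly with primes, $j(\mcP)=(j^*)^{-1}\mcP$, that
\begin{displaymath}
j(\supp a)=\{(j^*)^{-1}\mcP \;\vert\; b\notin(j^*)^{-1}\mcP\}=\supp b\intersec\supp\mcS^\perp ,
\end{displaymath}
which is closed in the subspace $\supp\mcS^\perp$. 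No identification of $(\mcS^\perp)^c$ with an idempotent-completed Verdier quotient is needed.
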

\begin{proof}
It is, of course, sufficient to check $j$ is closed on a basis of closed subsets for $\Spc (\mcS^\perp)^c$ and so we may reduce to considering the supports of compact objects. Given $a\in (\mcS^\perp)^c$ we will show there is a $b \in \mcT^c$ satisfying
\begin{displaymath}
j(\supp a) = \supp b \intersec \supp \mcS^\perp.
\end{displaymath}
We may assume by replacing $a$, if necessary, by $a\oplus \S a$ that there is a $b\in \mcT^c$ with $j^*b = a$ (using \cite{NeeCat}*{Proposition~4.5.11}); we note making this replacement does not change the support.

We then just need to observe the following series of equalities
\begin{align*}
j(\supp a) &= \{j(\mcP) \; \vert \; \mcP\in \Spc(\mcS^\perp)^c \; \text{and} \; a\notin \mcP\} \\
&= \{(j^*)^{-1}\mcP \; \vert \; \mcP\in \Spc(\mcS^\perp)^c \; \text{and} \; j^*b \notin \mcP\} \\
&= \{(j^*)^{-1}\mcP \; \vert \; \mcP\in \Spc(\mcS^\perp)^c \; \text{and} \; b \notin (j^*)^{-1}\mcP\} \\
&= \{\mcQ \in \supp \mcS^\perp \; \vert \; b\notin \mcQ\} \\
&= \supp b \intersec \supp \mcS^\perp.
\end{align*} 
\end{proof}

In order to state the main result of this section we need to recall the definition of the constructible topology.

\begin{defn}\label{defn_constructible}
Let $X$ be a spectral space. We denote by $X^\mathrm{con}$ the set $X$ equipped with the \emph{constructible topology} which is given by taking the quasi-compact open subsets of $X$ and their complements as a subbasis of open sets.

A subset $Z$ of $X$ which is closed in the constructible topology is called \emph{proconstructible}.
\end{defn}

\begin{rem}
The topology on $X^\mathrm{con}$ is also known as the patch topology, for instance this is the terminology used in \cite{HochsterSpectral}. It is again a spectral space and is Hausdorff.
\end{rem}

\begin{prop}\label{prop_procons}
Let $\mcT$ be a rigidly-compactly generated tensor triangulated category such that every point of $\Spc \mcT^c$ is visible. Given any smashing tensor ideal $\mcS$ of $\mcT$ the subset $\supp \mcS^\perp$ is proconstructible in $\Spc \mcT^c$.
\end{prop}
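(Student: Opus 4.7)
The plan is to reduce the statement to a general topological fact about spectral maps: by Lemma~\ref{lem_im_j} we already know $\supp \mcS^\perp = \im j$, so it suffices to show that the image of the map $j\colon \Spc(\mcS^\perp)^c \to \Spc \mcT^c$ is proconstructible. The main point is that $j$ is a spectral map between spectral spaces, and the image of any such map is proconstructible — an argument that proceeds through the constructible topology.

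First I would recall that $j = \Spc(j^*)$ is induced by the exact monoidal functor $j^*\colon \mcT^c \to (\mcS^\perp)^c$ between essentially small rigid tensor triangulated categories, and hence is a spectral map of spectral spaces by the general theory of \cite{BaSpec}. In particular $j$ is continuous with respect to the constructible topologies, since the preimage of a quasi-compact open (resp.\ its complement) under a spectral map is again of that form.

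Next I would exploit the fact that the constructible topology on any spectral space is compact Hausdorff (Remark just before the proposition). Thus $\Spc(\mcS^\perp)^{c,\mathrm{con}}$ is compact and $\Spc \mcT^{c,\mathrm{con}}$ is Hausdorff, so the continuous image
\begin{displaymath}
j\bigl(\Spc(\mcS^\perp)^c\bigr) \subseteq \Spc\mcT^{c,\mathrm{con}}
\end{displaymath}
is compact and hence closed in $\Spc\mcT^{c,\mathrm{con}}$. By Definition~\ref{defn_constructible} this closed set is precisely a proconstructible subset of $\Spc \mcT^c$. Combining with Lemma~\ref{lem_im_j} yields $\supp \mcS^\perp = \im j$ is proconstructible, as claimed.

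I expect no serious obstacle: once one has the identification $\supp \mcS^\perp = \im j$ from the previous lemmas and knows that $j$ is spectral, the proposition is a standard compact-Hausdorff argument in the constructible topology. The only mildly subtle point is making sure the right framework is invoked — namely that $j$ is spectral (which comes from Balmer's functoriality of $\Spc$) and that one need not worry about the fact that $j^*$ is only essentially surjective up to summands, since this affects injectivity of $j$ (already handled in Lemma~\ref{lem_inj}) rather than the proconstructibility of the image.
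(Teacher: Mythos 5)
Your argument is correct, and its core is the same as the paper's: pass to the constructible topology and use that a quasi-compact subset of a Hausdorff space is closed. The route differs in which preliminary results you lean on. The paper first establishes Lemma~\ref{lem_j_homeom}, that $j$ is closed and hence a homeomorphism onto $\supp \mcS^\perp$ with its subspace topology; it then notes that the resulting inclusion $\supp \mcS^\perp \hookrightarrow \Spc \mcT^c$ is spectral and runs the compactness argument for $(\supp \mcS^\perp)^{\mathrm{con}}$ inside $(\Spc \mcT^c)^{\mathrm{con}}$. You bypass the homeomorphism entirely: you use only Lemma~\ref{lem_im_j} (identifying $\im j$ with $\supp \mcS^\perp$, which rests on Proposition~\ref{prop_point_fibre}) together with the general fact that a spectral map of spectral spaces is continuous for the constructible topologies, so that $\im j$ is a quasi-compact, hence closed, subset of the Hausdorff space $(\Spc \mcT^c)^{\mathrm{con}}$. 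In effect you invoke the standard statement that the image of any spectral map is proconstructible, which is slightly more economical: for this proposition neither the injectivity of $j$ (Lemma~\ref{lem_inj}) nor its closedness (Lemma~\ref{lem_j_homeom}) is needed, and your observation that essential surjectivity of $j^*$ up to summands only matters for those other statements is accurate. What the paper's approach buys in exchange is the finer information that $j$ identifies $\Spc(\mcS^\perp)^c$ with $\supp \mcS^\perp$ as topological spaces, which is of independent interest; your approach buys a shorter path to the proconstructibility statement itself.
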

\begin{proof}
By the last lemma we know $\supp \mcS^\perp$ with the subspace topology is homeomorphic to $\Spc (\mcS^\perp)^c$. Thus $\supp \mcS^\perp$ is a spectral space and the inclusion of $\supp \mcS^\perp$ into $\Spc \mcT^c$ is spectral. The inclusion thus induces a spectral map
\begin{displaymath}
(\supp \mcS^\perp)^\mathrm{con} \to (\Spc \mcT^c)^\mathrm{con}
\end{displaymath}
which is again a homeomorphism onto its image. As $(\Spc \mcT^c)^\mathrm{con}$ is Hausdorff and $(\supp \mcS^\perp)^\mathrm{con}$ is a quasi-compact subset it is closed in $(\Spc \mcT^c)^\mathrm{con}$ i.e., it is proconstructible in $\Spc \mcT^c$.
\end{proof}

\section{Preliminaries on absolutely flat rings}\label{sec_af_prelims}
Before beginning our study of derived categories it seems prudent to provide some brief recollections on the class of rings with which we will be concerned. Throughout all of our rings are assumed to be commutative and unital.

\begin{defn}
Let $R$ be a commutative ring with unit. We say $R$ is \emph{absolutely flat} (also known as \emph{von Neumann regular}) if for every $r\in R$ there exists some $x\in R$ satisfying
\begin{displaymath}
r = r^2 x.
\end{displaymath}
\end{defn}

From now on $R$ denotes a commutative absolutely flat ring. We will assume $R$ is \emph{not} noetherian. 
The following lemma is deduced easily from commutativity of $R$.

\begin{lem}
For every $r\in R$ there is a unique $x\in R$ such that $r = r^2x$ and $x=x^2r$. We call $x$ the \emph{weak inverse} of $r$.
\end{lem}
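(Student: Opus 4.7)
The plan is to establish existence by modifying an arbitrary witness $y$ of the absolute flatness condition and then prove uniqueness by extracting an idempotent from any such pair $(r,x)$.

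For existence, I would start with some $y\in R$ satisfying $r=r^{2}y$, guaranteed by the definition, and set $x=y^{2}r$. The first identity is then a direct computation: $r^{2}x=r^{2}y^{2}r=(r^{2}y)(ry)=r\cdot ry=r^{2}y=r$, iterating the defining relation. The second identity $x=x^{2}r$ follows by a similar (if slightly longer) substitution: $x^{2}r=y^{4}r^{3}$, and each time one has a factor $r^{2}y$ available it can be replaced by $r$ to collapse $y^{4}r^{3}$ down to $y^{2}r=x$. (The cleanest way to organise this calculation is to first observe that $e:=ry$ is idempotent because $e^{2}=r^{2}y^{2}=(r^{2}y)y=ry=e$, and that $er=r$; then $x=ey$ and the verifications are immediate.)

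For uniqueness, suppose both $x$ and $x'$ satisfy $r=r^{2}x=r^{2}x'$ together with $x=x^{2}r$ and $x'=(x')^{2}r$. The key remark is that $e:=rx$ is idempotent, since $(rx)^{2}=r\cdot rx\cdot x=r\cdot x=e$ using $r^{2}x=r$; moreover $er=r$ and $ex=x$ (the latter from $x=x^{2}r=x\cdot xr$ together with commutativity). The same holds for $e':=rx'$. Now $ee'=e(rx')=(er)x'=rx'=e'$, and symmetrically $e'e=e$; by commutativity the two products coincide, hence $e=e'$, i.e.\ $rx=rx'$. Finally,
\begin{displaymath}
x=ex=rx\cdot x=rx'\cdot x=e\cdot x'=e'x'=x',
\end{displaymath}
which gives uniqueness.

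The proof is essentially elementary; the only mild obstacle is choosing the right modification of $y$ to enforce the second equation, and it is natural to anticipate that some idempotent (here $ry$, respectively $rx$) will be the bookkeeping device that makes every manipulation transparent. No appeal to anything beyond commutativity and the defining relation is needed.
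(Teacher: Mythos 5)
Your proof is correct: the existence step (replacing an arbitrary witness $y$ of $r=r^2y$ by $x=y^2r$) and the uniqueness step via the idempotents $rx$ and $rx'$ both check out, using nothing beyond commutativity and the defining relation. The paper itself gives no proof — it merely asserts the lemma "is deduced easily from commutativity of $R$" — and your argument is precisely the standard verification that assertion has in mind, so there is nothing to reconcile.
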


We now collect some standard characterisations of commutative absolutely flat rings; we will, in general, use these properties without reference to the proposition.

\begin{prop}\label{prop_af_rings}
For a ring $S$ the following are equivalent:
\begin{itemize}
\item[$(i)$] $S$ is absolutely flat;
\item[$(ii)$] $S$ is reduced and has Krull dimension $0$;
\item[$(iii)$] every localization of $S$ at a prime ideal is a field;
\item[$(iv)$] $S$ is a subring of a product of fields, namely
\begin{displaymath}
S \subseteq \prod_{\mathfrak{p}\in \Spec S} k(\mathfrak{p}) =: S',
\end{displaymath}
and $S$ is closed under weak inverses in $S'$;
\item[$(v)$] every simple $S$-module is injective;
\item[$(vi)$] every $S$-module is flat.
\end{itemize}
\end{prop}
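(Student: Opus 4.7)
The plan is the standard omnibus proof characterising commutative von Neumann regular rings. The main cycle (i) $\Rightarrow$ (ii) $\Rightarrow$ (iii) $\Rightarrow$ (i) rests on the observation that the defining identity $r = r^2 x$ is equivalent to $r(1-rx) = 0$ and that $e := rx$ is then an idempotent. For (i) $\Rightarrow$ (ii) I would note that for any prime $\mathfrak{p}$ not containing $r$ one must have $1 - rx \in \mathfrak{p}$, so $r$ is a unit modulo $\mathfrak{p}$ and $\mathfrak{p}$ is maximal, while reducedness follows from the iterated identity $r^n = r^{n+1} x$ which propagates $r^N = 0$ down to $r = 0$. The step (ii) $\Rightarrow$ (iii) is a direct localisation argument: $S_\mathfrak{p}$ remains reduced of Krull dimension zero, so its maximal ideal coincides with the nilradical and is therefore zero. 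For the closing (iii) $\Rightarrow$ (i) a local-to-global argument works: whether $r$ lies in $(r^2)$ can be checked after localising at each prime, and in a field it is trivial. The equivalence with (iv) is then almost formal: (iii) yields an embedding $S \hookrightarrow \prod_\mathfrak{p} k(\mathfrak{p})$ because $S$ is reduced and the intersection of all primes vanishes, while (i) supplies closure under weak inverses; conversely (iv) trivially implies (i) by reading off the weak inverse componentwise in the product.

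For (i) $\Leftrightarrow$ (vi) the key observation is that every principal ideal satisfies $(r) = (rx) = (e)$ and is therefore generated by an idempotent; an induction on the number of generators, replacing each orthogonal pair $e,f$ of idempotents by the single idempotent $e + f$, promotes this to the statement that every finitely generated ideal of $S$ is generated by an idempotent and hence is a direct summand. Consequently $S/I$ is projective for every finitely generated $I$, so $\Tor_1^S(M, S/I) = 0$ for all $M$, yielding flatness of every $S$-module. Conversely, flatness of $S/rS$ applied to the short exact sequence $0 \to rS \to S \to S/rS \to 0$ forces $(r)/(r^2) = 0$, i.e.\ $r = r^2 x$.

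The direction (i) $\Rightarrow$ (v) I would verify through Baer's criterion for a simple module $S/\mathfrak{m}$: given $\phi \colon I \to S/\mathfrak{m}$, either $I \subseteq \mathfrak{m}$, in which case $\phi = 0$ since $\phi(r) = rx \cdot \phi(r) = 0$ because $rx \in \mathfrak{m}$, or else $I + \mathfrak{m} = S$ and writing $1 = y + m$ with $y \in I$ the assignment $s \mapsto s\phi(y)$ extends $\phi$ (using that $m$ annihilates $S/\mathfrak{m}$). The converse (v) $\Rightarrow$ (i) is the main obstacle: one has to extract a weak inverse of an arbitrary $r \in S$ from injectivity of every simple module. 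My approach would be to use injectivity of $S/\mathfrak{m}$ to split a carefully chosen cyclic submodule involving $r$ off a larger module, forcing $(r)$ to be a direct summand with idempotent generator $e$ from which $x$ can be read off. If that direct attack stalls I would fall back on first deriving (ii) from (v), using injectivity of simples to rule out any strict inclusion $\mathfrak{p} \subsetneq \mathfrak{m}$ of primes and then feeding the result back through the already-established cycle.
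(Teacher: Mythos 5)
The paper itself gives no proof of this proposition -- it is presented as a list of standard characterisations to be used without further reference -- so the only issue is whether your argument is actually complete. Most of it is: the cycle (i)$\Rightarrow$(ii)$\Rightarrow$(iii)$\Rightarrow$(i), the equivalence with (iv), both directions of (i)$\Leftrightarrow$(vi), and (i)$\Rightarrow$(v) are all correct as sketched. (One cosmetic point: in the finitely generated ideal step the idempotents $e,f$ attached to two generators need not be orthogonal, so you should first replace $f$ by $f(1-e)$, or simply use the idempotent $e+f-ef$.)

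The genuine gap is (v)$\Rightarrow$(i), which you explicitly leave open, and neither of your proposed strategies closes it as stated. The fallback of ``ruling out strict inclusions $\mathfrak{p}\subsetneq\mathfrak{m}$'' would at best give Krull dimension zero, whereas (ii) also demands reducedness; and in fact both facts need the same missing lemma. That lemma (Kaplansky) is: if every simple $S$-module is injective, then for every ideal $I$ and every $r\notin I$ there is a maximal ideal containing $I$ but not $r$ (equivalently, every ideal is an intersection of maximal ideals). To prove it, use Zorn's lemma to choose $J\supseteq I$ maximal with respect to $r\notin J$; then every nonzero submodule of $S/J$ contains $\bar r$, so $C=(rS+J)/J$ is the unique minimal nonzero submodule, hence simple and essential in $S/J$; by hypothesis $C$ is injective, so it is a direct summand, and essentiality forces $S/J=C$, i.e.\ $J$ is maximal. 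Now apply this with $I=(r^2)$: if $r\notin(r^2)$ you obtain a maximal (hence prime) ideal containing $r^2$ but not $r$, which is absurd; so $r\in(r^2)$ and $S$ is absolutely flat. Your first strategy (``split a cyclic submodule off a larger module'') is in the right spirit, but without the Zorn construction of $J$ and the observation that the relevant cyclic submodule is simple and essential, it does not get off the ground -- in particular, injectivity of simples does not directly exhibit $(r)$ as a direct summand of $S$.
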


The spectrum of $R$, $\Spec R$, is a zero dimensional, Hausdorff, totally disconnected spectral space. We wish to say a little more about it. 
The first two lemmas are trivialities (we note they are valid for the constructible topology on any spectral space).

\begin{lem}\label{lem_closed_qcompact}
A subset $Z\cie \Spec R$ is closed if and only if it quasi-compact.
\end{lem}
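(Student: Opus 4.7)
The plan is to deduce this from two purely topological facts together with the structural properties of $\Spec R$ already recorded in the excerpt, namely that $\Spec R$ is a quasi-compact Hausdorff space. No ring theory beyond these will be needed.

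For the ``closed implies quasi-compact'' direction, I would simply invoke that any spectral space is quasi-compact (so $\Spec R$ itself is quasi-compact), and that a closed subspace of a quasi-compact space is quasi-compact: given any open cover of $Z$ inside $\Spec R$, adjoin the open set $\Spec R\setminus Z$ to get an open cover of $\Spec R$, extract a finite subcover, and discard the complement to obtain a finite subcover of $Z$.

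For the ``quasi-compact implies closed'' direction, I would use that $\Spec R$ is Hausdorff (already noted immediately before the lemma). The standard argument: fix $Z\cie\Spec R$ quasi-compact and a point $x\notin Z$; for each $z\in Z$ use Hausdorffness to separate $x$ and $z$ by disjoint opens $U_z\ni x$ and $V_z\ni z$; the $V_z$ cover $Z$, so quasi-compactness gives a finite subcover $V_{z_1},\dots,V_{z_n}$; then $U_{z_1}\intersec\cdots\intersec U_{z_n}$ is an open neighbourhood of $x$ disjoint from $Z$, showing $\Spec R\setminus Z$ is open.

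There is essentially no obstacle here; the only thing worth double-checking is that both ingredients are indeed available, but the preceding sentence in the paper already asserts that $\Spec R$ is a zero-dimensional Hausdorff spectral space, so both quasi-compactness and Hausdorffness are in hand. The parenthetical remark that the statement holds more generally for the constructible topology on any spectral space is consistent with this proof, since $X^{\mathrm{con}}$ is also quasi-compact and Hausdorff (as recalled in the remark following Definition~\ref{defn_constructible}).
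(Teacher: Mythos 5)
Your proof is correct and is exactly the argument the paper leaves implicit: the paper's proof simply says the statement is immediate from $\Spec R$ being Hausdorff (with quasi-compactness of the spectral space $\Spec R$ understood), and your write-up spells out those two standard topological facts in detail.
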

\begin{proof}
This is immediate from the fact that $\Spec R$ is Hausdorff.
\end{proof}

\begin{lem}\label{lem_open_thomason}
A subset $V \cie \Spec R$ is Thomason if and only if it is open.
\end{lem}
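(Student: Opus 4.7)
The plan is to prove both implications directly, exploiting the fact that $\Spec R$ is a zero-dimensional Hausdorff spectral space together with Lemma~\ref{lem_closed_qcompact}.

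First I would handle the easier direction. Suppose $V$ is Thomason, so $V = \bigcup_j Z_j$ with each $Z_j$ closed and $\Spec R \setminus Z_j$ quasi-compact. By Lemma~\ref{lem_closed_qcompact}, the quasi-compactness of $\Spec R \setminus Z_j$ implies that this complement is in fact closed, and hence $Z_j$ itself is open. Thus $V$ is a union of open sets, so open.

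For the converse, suppose $V \cie \Spec R$ is open. Since $\Spec R$ is a spectral space, its quasi-compact open subsets form a basis; but by Lemma~\ref{lem_closed_qcompact} any quasi-compact subset is closed, so the quasi-compact opens are exactly the clopen subsets. Therefore I can write $V = \bigcup_i U_i$ with each $U_i$ clopen. Each $U_i$ is closed, and its complement $\Spec R \setminus U_i$ is again clopen, in particular closed, hence quasi-compact by Lemma~\ref{lem_closed_qcompact}. This exhibits $V$ as a union of closed subsets with quasi-compact complements, i.e., as a Thomason subset.

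There is no real obstacle: the entire argument is driven by the collapse of closed and quasi-compact for subsets of $\Spec R$, which upgrades the usual spectral basis of quasi-compact opens to a basis of clopens and makes the two notions coincide. The only subtlety worth flagging is that one must recall, when justifying the clopen basis, the general spectral-space fact that quasi-compact opens form a basis.
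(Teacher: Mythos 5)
Your proof is correct and follows essentially the same route as the paper: both arguments rest on Lemma~\ref{lem_closed_qcompact} to identify quasi-compact open subsets with closed subsets having quasi-compact complement, and then use that opens in a spectral space are unions of quasi-compact opens while Thomason subsets are unions of the latter. The only difference is presentational, in that you spell out the two implications separately where the paper phrases it as a single identification.
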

\begin{proof}
Let $U$ be a quasi-compact open subset of $\Spec R$. Then, by the last lemma, $U$ is also closed and its complement, by virtue of being closed, is also quasi-compact i.e., both $U$ and its complement are closed Thomason subsets. As $\Spec R$ is spectral the subset $V$ is open if and only if it is a union of quasi-compact open subsets of $\Spec R$. But we have just shown this is precisely the same thing as being a union of closed subsets with quasi-compact complement i.e., as being Thomason.
\end{proof}

\begin{lem}\label{lem_spec_inf}
The spectrum of $R$ is infinite i.e., $\vert \Spec R \vert \geq \aleph_0$.
\end{lem}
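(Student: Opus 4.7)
The plan is to prove the contrapositive: if $\Spec R$ is finite, then $R$ is noetherian, contradicting our standing assumption that $R$ is not noetherian.

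Suppose $\Spec R = \{\mathfrak{m}_1, \ldots, \mathfrak{m}_n\}$ is finite. Since $R$ has Krull dimension zero by Proposition~\ref{prop_af_rings}, every prime is maximal, so these are precisely the maximal ideals of $R$. The maximal ideals are pairwise comaximal, so by the Chinese Remainder Theorem the natural map
\begin{displaymath}
R / \bigcap_{i=1}^n \mathfrak{m}_i \longrightarrow \prod_{i=1}^n R/\mathfrak{m}_i
\end{displaymath}
is an isomorphism. Because $R$ is also reduced, the nilradical vanishes, and since in a zero-dimensional ring the nilradical coincides with the Jacobson radical we have $\bigcap_i \mathfrak{m}_i = 0$. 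Hence $R$ is a finite product of fields, which is manifestly noetherian (indeed artinian). This contradicts the blanket assumption that $R$ is not noetherian, so $\Spec R$ must be infinite.

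The argument is essentially a one-liner once the hypotheses of Proposition~\ref{prop_af_rings} are unpacked, so there is no real obstacle; the only subtle point is remembering that ``not noetherian'' is hypothesised on $R$ earlier in the section, which is what rules out the finite case. Alternatively, one could argue directly that a non-noetherian absolutely flat ring contains an infinite sequence of orthogonal idempotents (obtained from a strictly ascending chain of ideals generated by idempotents), and each such idempotent cuts out a clopen subset of $\Spec R$, again forcing infinitude; but the CRT argument is cleaner.
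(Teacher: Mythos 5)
Your proof is correct, but it takes a genuinely different route from the paper's. You show directly that a reduced zero-dimensional ring with finitely many primes is a finite product of fields: all primes are maximal, the Chinese Remainder Theorem gives $R/\bigcap_i\mathfrak{m}_i \cong \prod_i R/\mathfrak{m}_i$, and reducedness kills the intersection (which is the nilradical, since every prime is maximal), so $R$ itself is a finite product of fields and hence noetherian, contradicting the standing assumption. The paper instead uses characterisation $(iv)$ of Proposition~\ref{prop_af_rings} to view $R$ as a subring of $S = k(\mathfrak{p}_1)\times\cdots\times k(\mathfrak{p}_n)$, observes that $S$ is noetherian and module-finite over $R$, and invokes the Eakin--Nagata theorem to conclude $R$ is noetherian. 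Your argument is the more elementary one, needing only CRT and the vanishing of the nilradical rather than a nontrivial descent theorem; the paper's is quicker to state given that the embedding into the product of residue fields is already on record in Proposition~\ref{prop_af_rings}. Both arguments hinge, as you note, on the blanket hypothesis that $R$ is not noetherian.
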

\begin{proof}
Suppose $\vert \Spec R \vert < \aleph_0$ so $R$ has finitely many prime ideals $\mathfrak{p}_1,\ldots,\mathfrak{p}_n$. Then $R$ is, by Proposition~\ref{prop_af_rings}, a subring of $S= k(\mathfrak{p}_1)\times\ldots\times k(\mathfrak{p}_n)$. Clearly $S$ is noetherian and module finite over $R$ so by the Eakin-Nagata theorem $R$ is noetherian (hence isomorphic to $S$) which is a contradiction.
\end{proof}

\begin{lem}\label{lem_spec_not_noeth}
The spectrum of $R$ is not a noetherian topological space.
\end{lem}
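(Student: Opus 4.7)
The plan is to argue by contradiction, exploiting the Hausdorffness of $\Spec R$ already recorded in Lemma~\ref{lem_closed_qcompact} together with the infinitude established in Lemma~\ref{lem_spec_inf}. The intuition is that a noetherian Hausdorff space is forced to be very small: quasi-compactness gets transferred to every open, and in Hausdorff spaces quasi-compact implies closed, so every open becomes clopen.

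First I would suppose for contradiction that $\Spec R$ is a noetherian topological space. By definition this means every open subset of $\Spec R$ is quasi-compact. Now invoke Lemma~\ref{lem_closed_qcompact}, which characterises closed subsets of $\Spec R$ exactly as the quasi-compact ones. Combining these two facts, every open subset of $\Spec R$ is closed; taking complements, every closed subset is open. Hence the topology on $\Spec R$ is discrete.

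Next I would recall that $\Spec R$, being a spectral space, is quasi-compact. A quasi-compact discrete space must be finite, giving $\vert \Spec R \vert < \aleph_0$. But this directly contradicts Lemma~\ref{lem_spec_inf}, completing the proof.

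I do not anticipate any obstacle here: the two preceding lemmas were essentially designed for precisely this deduction, and no further structural input from absolute flatness is needed beyond the Hausdorff/spectral description of $\Spec R$ that has already been set up.
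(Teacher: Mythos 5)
Your argument is correct and uses exactly the same ingredients as the paper's proof (Lemma~\ref{lem_closed_qcompact}, Lemma~\ref{lem_spec_inf}, and quasi-compactness of $\Spec R$); it is essentially the contrapositive arrangement of the paper's direct argument, which instead produces a non-quasi-compact open set as the complement of a non-open point. The only implicit step, that ``every open is closed'' forces discreteness, is fine since points of $\Spec R$ are closed by Hausdorffness, which you have already invoked.
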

\begin{proof}
By the last lemma $\Spec R$ has infinitely many points. Thus, since it is quasi-compact, $\Spec R$ cannot be discrete. So there is a point $\mathfrak{p}\in \Spec R$ with $\{\mathfrak{p}\}$ not open. Thus $\Spec R \setminus \{\mathfrak{p}\}$ is open but not closed and hence not quasi-compact by Lemma \ref{lem_closed_qcompact}. This demonstrates that $\Spec R$ is not noetherian.
\end{proof}

\begin{rem}
The proof of the lemma exhibits a point of $\Spec R$ which is not a Thomason subset.
\end{rem}


We conclude by reminding the reader that one can functorially associate to any commutative ring $S$ an absolutely flat ring $S^\mathrm{abs}$. Let $\CRing$ denote the category of commutative unital rings and $\CRing^{\mathrm{abs}}$ denote the full subcategory of absolutely flat rings.

\begin{thm}[\cite{Olivier_UAF}*{Proposition~5}]\label{thm_af_adj}
The forgetful functor $\CRing^\mathrm{abs} \to \CRing$ admits a left adjoint
\begin{displaymath}
(-)^\mathrm{abs}\colon \CRing \to \CRing^\mathrm{abs}.
\end{displaymath}
Given a ring $S$ the unit of adjunction $\eta\colon S\to S^\mathrm{abs}$ induces a bijection of sets
\begin{displaymath}
\Spec S^\mathrm{abs} \to \Spec S
\end{displaymath}
and isomorphisms $(S^\mathrm{abs})_P \iso k(\eta^{-1}P)$ for all $P\in \Spec S^\mathrm{abs}$. Furthermore, there is a homeomorphism
\begin{displaymath}
\Spec S^\mathrm{abs} \stackrel{\sim}{\to} (\Spec S)^\mathrm{con},
\end{displaymath}
that is, $S^\mathrm{abs}$ realises the constructible topology on $\Spec S$ (see Definition~\ref{defn_constructible}).
\end{thm}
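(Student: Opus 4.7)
I would construct the reflection explicitly. For a ring $S$, the canonical map $S \to S' := \prod_{\mathfrak{p}\in \Spec S} k(\mathfrak{p})$ has kernel the nilradical; define $S^\mathrm{abs}$ to be the smallest subring of $S'$ containing the image of $S$ and closed under the weak-inverse operation of $S'$ (which exists, computed componentwise). Characterisation $(iv)$ of Proposition~\ref{prop_af_rings} immediately gives that $S^\mathrm{abs}$ is absolutely flat. For the universal property, let $f\colon S\to T$ with $T$ absolutely flat and use the embedding $T\hookrightarrow T':=\prod_{\mathfrak{q}\in\Spec T} k(\mathfrak{q})$ in which $T$ is weak-inverse closed. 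For each $\mathfrak{q}\in\Spec T$ the map $f$ induces a field extension $k(f^{-1}\mathfrak{q}) \hookrightarrow k(\mathfrak{q})$; assembling these yields a ring homomorphism $S' \to T'$ whose restriction to $\eta(S)$ lands in $T$, and hence by weak-inverse closure of $T$ in $T'$ whose restriction to $S^\mathrm{abs}$ factors through $T$, producing the required extension. Uniqueness is automatic, since $S^\mathrm{abs}$ is generated under ring operations and weak inverses by $\eta(S)$ and weak inverses are unique.

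\textbf{Spectrum bijection and residue fields.} Let $\phi\colon \Spec S^\mathrm{abs} \to \Spec S$ denote the induced continuous map. Surjectivity follows by applying the universal property to $S \to k(\mathfrak{p})$: the resulting map $S^\mathrm{abs} \to k(\mathfrak{p})$ has image generated by that of $S$ and weak inverses thereof, hence equals $k(\mathfrak{p})$, so its kernel is a prime over $\mathfrak{p}$. For injectivity and the local-ring identification, I would first observe that in any absolutely flat ring $R$ and prime $\mathfrak{q}$, the canonical map $R/\mathfrak{q} \to R_\mathfrak{q}$ is an isomorphism of fields: for $r\in\mathfrak{q}$ with weak inverse $r^*$, the identity $r(1-rr^*) = 0$ with $1-rr^*\notin \mathfrak{q}$ forces $r$ to die in $R_\mathfrak{q}$, while any $s\notin\mathfrak{q}$ satisfies $ss^*\equiv 1\pmod{\mathfrak{q}}$, so becomes invertible in $R/\mathfrak{q}$. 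Applied to $P\in\Spec S^\mathrm{abs}$ over $\mathfrak{p}$, the field $S^\mathrm{abs}/P = (S^\mathrm{abs})_P$ is generated as a ring by $\eta(S)/P$ together with inverses (since weak inverses in a field are inverses), which is exactly $k(\mathfrak{p})$; any two $P, P'$ over $\mathfrak{p}$ therefore yield two surjections $S^\mathrm{abs}\twoheadrightarrow k(\mathfrak{p})$ extending the same $S\to k(\mathfrak{p})$, and uniqueness forces $P = P'$.

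\textbf{Constructible topology.} The map $\phi$ is Zariski-continuous. Since the topology on $(\Spec S)^\mathrm{con}$ is generated by the subbasis $\{D(s), V(s) : s\in S\}$, it suffices to check each such set has Zariski-open preimage in $\Spec S^\mathrm{abs}$. The preimage of $D(s)$ is $D(\eta(s))$, which is open. For $V(s)$ the key point is that $e := \eta(s)\eta(s)^* \in S^\mathrm{abs}$ is an idempotent with $D(e) = D(\eta(s))$, whence $V(\eta(s)) = D(1-e)$ is Zariski-open. Since $\Spec S^\mathrm{abs}$ is quasi-compact Hausdorff (as recalled at the start of Section~\ref{sec_af_prelims}) and $(\Spec S)^\mathrm{con}$ is quasi-compact Hausdorff (by the remark following Definition~\ref{defn_constructible}), the continuous bijection $\phi$ is automatically a homeomorphism.

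\textbf{Main obstacle.} The only delicate step is the universal property. The essential input is that any absolutely flat ring $T$ is weak-inverse closed inside $\prod_{\mathfrak{q}\in\Spec T} k(\mathfrak{q})$, which is exactly what lets the componentwise assembly of the induced field extensions factor through $T$ rather than merely through the ambient product; everything else falls out from this via standard features of absolute flatness.
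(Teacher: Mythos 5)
Your argument is essentially correct, but it is worth noting that the paper does not prove this statement at all: it is quoted from Olivier \cite{Olivier_UAF}*{Proposition~5}, and the only construction recorded in the text is the presentation $S^\mathrm{abs} = S[x_s \,\vert\, s\in S]/(s^2x_s-s,\, x_s^2s-x_s)$, i.e.\ one freely adjoins weak inverses, which makes the adjunction essentially formal but leaves the computation of $\Spec S^\mathrm{abs}$ and of the local rings as the real content. Your route is genuinely different: you realise $S^\mathrm{abs}$ internally, as the weak-inverse closure of the image of $S$ in $\prod_{\mathfrak{p}\in\Spec S}k(\mathfrak{p})$, which makes absolute flatness (via the easy direction of Proposition~\ref{prop_af_rings}(iv)), the residue-field isomorphisms $(S^\mathrm{abs})_P\iso k(\eta^{-1}P)$, and the bijectivity of $\Spec(\eta)$ quite transparent, at the cost of having to verify the universal property by hand; your topological argument (idempotents $\eta(s)\eta(s)^*$ make each $V(\eta(s))$ open, then continuous bijection from a quasi-compact space to a Hausdorff space) is a clean way to get the homeomorphism onto $(\Spec S)^\mathrm{con}$. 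Two small points in your universal-property step deserve to be made explicit, since they carry the weight of the argument: first, the assembled map $\prod_{\mathfrak{p}}k(\mathfrak{p})\to\prod_{\mathfrak{q}}k(\mathfrak{q})$ commutes with componentwise weak inverses (a field embedding sends inverses to inverses and $0$ to $0$), which is exactly why the preimage of $T$ is weak-inverse closed and hence contains $S^\mathrm{abs}$; second, for uniqueness you should say that \emph{any} ring homomorphism into an absolutely flat ring preserves weak inverses, because the image of a weak inverse satisfies the two defining identities and weak inverses are unique. With those sentences added, your proof is complete and self-contained, which is arguably an improvement on a bare citation, though it proves nothing beyond what Olivier's construction gives.
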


Given a ring $S$ one can explicitly construct $S^\mathrm{abs}$ as
\begin{displaymath}
S^\mathrm{abs} = S[x_s \; \vert \; s\in S] / (s^2x_s - s, x_s^2s - x_s \; \vert \; s\in S).
\end{displaymath}

\section[Derived categories]{Derived categories of absolutely flat rings}\label{sec_dercat}
Throughout this section $R$ is again a non-noetherian absolutely flat ring. As usual $\D(R)$ denotes the unbounded derived category of $R$ and $\D^\mathrm{perf}(R)$ the full subcategory of perfect complexes. We recall that $\D(R)$ is a rigidly-compactly generated tensor triangulated category with a monoidal model, the subcategory $\D^\mathrm{perf}(R)$ is the full subcategory of compact objects in $\D(R)$, and by a theorem of Thomason \cite{Thomclass}
\begin{displaymath}
\Spc \D^\mathrm{perf}(R) \iso \Spec R.
\end{displaymath}

For $X\in \D(R)$ we set
\begin{displaymath}
\supph X = \{\mathfrak{p}\in \Spec R \; \vert \; X\otimes k(\mathfrak{p}) \neq 0\}.
\end{displaymath}
Since $R$ is absolutely flat there is no need to derive the tensor product although, in any case, $R_\mathfrak{p}\iso k(\mathfrak{p})$. Thus the subset of $\Spec R$ we have defined agrees with the set of primes $\mathfrak{p}$ such that $X_\mathfrak{p}$ is not acyclic. This latter observation yields the following lemma.

\begin{lem}\label{lem_detection}
An object $X$ of $D(R)$ is zero if and only if $\supph X = \varnothing$ i.e., if and only if $X\otimes k(\mathfrak{p}) \iso 0$ for all $\mathfrak{p}\in \Spec R$.
\end{lem}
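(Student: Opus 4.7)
The lemma asserts an ``if and only if''; the ``only if'' direction is immediate since tensoring with anything sends the zero object to zero, so the content is the converse. My plan is to reduce acyclicity of $X$ to vanishing of all of its cohomology modules and then use the standard fact that a module vanishes iff all of its localisations at prime ideals vanish.

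First I would exploit the two features of $R$ emphasised in the remark immediately preceding the lemma. Since $R$ is absolutely flat, every $R$-module is flat, so the underived tensor product $X\otimes_R(-)$ already computes $X\otimes^{\mathbf{L}}_R(-)$; in particular for each prime $\mathfrak{p}$ the object $X\otimes_R k(\mathfrak{p})$ is isomorphic in $\D(R)$ to the complex obtained by tensoring $X$ levelwise with $k(\mathfrak{p})$. Second, Proposition~\ref{prop_af_rings}(iii) gives a canonical isomorphism $R_{\mathfrak{p}}\iso k(\mathfrak{p})$, so this levelwise tensor product is naturally isomorphic to the localisation $X_{\mathfrak{p}}$.

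Now assume $\supph X=\varnothing$, i.e.\ $X\otimes_R k(\mathfrak{p})=0$ in $\D(R)$ for every $\mathfrak{p}\in\Spec R$. By the previous paragraph this means $X_{\mathfrak{p}}=0$ in $\D(R_{\mathfrak{p}})$ for every $\mathfrak{p}$. Since localisation is exact, $H^i(X)_{\mathfrak{p}}\iso H^i(X_{\mathfrak{p}})=0$ for every $i$ and every $\mathfrak{p}$; consequently each $R$-module $H^i(X)$ has trivial localisation at every prime and hence is zero. Therefore $X$ is acyclic, so $X=0$ in $\D(R)$, completing the proof.

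There is no real obstacle here: the argument is purely formal once one records that over an absolutely flat ring the derived tensor product with $k(\mathfrak{p})$ coincides with ordinary localisation at $\mathfrak{p}$. The only small point worth stating carefully is that flatness of every module over $R$ is what lets us identify $X\otimes_R k(\mathfrak{p})$ with $X_{\mathfrak{p}}$ in $\D(R)$ without any cofibrant replacement.
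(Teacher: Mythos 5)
Your argument is correct and is essentially the one the paper intends: the lemma is stated as an immediate consequence of the preceding observation that, since $R$ is absolutely flat and $R_{\mathfrak{p}}\iso k(\mathfrak{p})$, the object $X\otimes k(\mathfrak{p})$ is just the localisation $X_{\mathfrak{p}}$, so vanishing of all $X\otimes k(\mathfrak{p})$ means all localisations of each $H^i(X)$ vanish and hence $X$ is acyclic. You have simply written out in full the same reduction the paper leaves implicit.
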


We now connect this support with Balmer's support on the compact objects and with the corresponding Rickard idempotents. 

\begin{lem}
For every $\mathfrak{p} \in \Spec R$ the subset $\mathcal{Z}(\mathfrak{p}) = \Spec R \setminus \{\mathfrak{p}\}$ is Thomason and the corresponding Rickard idempotent $L_{\mathcal{Z}(\mathfrak{p})}R$ is canonically isomorphic to $k(\mathfrak{p})$. The resulting localisation corresponds to the fully faithful inclusion of $\D(R_\mathfrak{p}) = \D(k(\mathfrak{p}))$ in $\D(R)$.
\end{lem}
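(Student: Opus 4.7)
That $\mathcal{Z}(\mathfrak{p})$ is Thomason is immediate: since $\Spec R$ is Hausdorff the singleton $\{\mathfrak{p}\}$ is closed, so $\mathcal{Z}(\mathfrak{p})$ is open and hence Thomason by Lemma~\ref{lem_open_thomason}.

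To identify $L_{\mathcal{Z}(\mathfrak{p})}R$ I would exploit that every prime of $R$ is maximal and $R_\mathfrak{p}\iso R/\mathfrak{p} = k(\mathfrak{p})$, so that the canonical map $\eta\colon R \to k(\mathfrak{p})$ is a surjective ring homomorphism with kernel $\mathfrak{p}$. This yields a canonical triangle
\begin{displaymath}
\mathfrak{p} \longrightarrow R \stackrel{\eta}{\longrightarrow} k(\mathfrak{p}) \longrightarrow \mathfrak{p}[1]
\end{displaymath}
in $\D(R)$, and the plan is to identify it with the localisation triangle defining $L_{\mathcal{Z}(\mathfrak{p})}R$. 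This reduces the lemma to showing $(a)$ that $\mathfrak{p}\in \GGamma_{\mathcal{Z}(\mathfrak{p})}\D(R)$ and $(b)$ that $k(\mathfrak{p})$ is right orthogonal to $\GGamma_{\mathcal{Z}(\mathfrak{p})}\D(R)$.

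For $(a)$ the key observation is that given $r\in\mathfrak{p}$ with weak inverse $x$, the element $e=rx$ is idempotent and $(r)=eR$ is a direct summand of $R$, hence a compact object of $\D(R)$. Since $r\in\mathfrak{p}$ forces $e\in\mathfrak{p}$, the stalk $(eR)_\mathfrak{p}$ vanishes and $\supp eR\cie \mathcal{Z}(\mathfrak{p})$. As finitely generated ideals in an absolutely flat ring are principal and generated by idempotents, every finitely generated subideal of $\mathfrak{p}$ has this form, so $\mathfrak{p}$ is the filtered union of compact objects supported in $\mathcal{Z}(\mathfrak{p})$; closure of localising subcategories under filtered homotopy colimits then gives $\mathfrak{p}\in \GGamma_{\mathcal{Z}(\mathfrak{p})}\D(R)$. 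For $(b)$, every compact generator of $\GGamma_{\mathcal{Z}(\mathfrak{p})}\D(R)$ has vanishing stalk at $\mathfrak{p}$, so the coproduct- and triangle-preserving functor $-\otimes_R k(\mathfrak{p})$ annihilates the whole subcategory; the change-of-rings adjunction
\begin{displaymath}
\Hom_{\D(R)}(X,k(\mathfrak{p})) \iso \Hom_{\D(k(\mathfrak{p}))}(X\otimes_R k(\mathfrak{p}),k(\mathfrak{p})) = 0
\end{displaymath}
for $X\in \GGamma_{\mathcal{Z}(\mathfrak{p})}\D(R)$ then supplies the orthogonality.

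Finally, $\eta$ is a ring epimorphism (as $k(\mathfrak{p})\otimes_R k(\mathfrak{p})\iso k(\mathfrak{p})$), so restriction of scalars $\D(k(\mathfrak{p}))\to \D(R)$ is fully faithful with essential image the objects $X$ satisfying $k(\mathfrak{p})\otimes_R X\iso X$; by $(a)$ and $(b)$ this coincides with the $L_{\mathcal{Z}(\mathfrak{p})}$-local subcategory of $\D(R)$. The substantive step of the argument is $(a)$: approximating $\mathfrak{p}$ from within $\GGamma_{\mathcal{Z}(\mathfrak{p})}\D(R)$ really does require both the weak inverse structure afforded by absolute flatness and the stability of localising subcategories under filtered homotopy colimits.
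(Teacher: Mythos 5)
Your argument is correct, but it takes a genuinely different route from the paper. The paper's proof is a one-liner: it invokes the standard fact, valid over any commutative ring, that the finite localisation associated to the Thomason subset $\mathcal{Z}(\mathfrak{p})$ is $-\otimes_R R_\mathfrak{p}$ (classically seen by writing $\mathcal{Z}(\mathfrak{p})$ as the union of the closed sets $V(s)$ for $s\notin\mathfrak{p}$ and identifying $L_{\mathcal{Z}(\mathfrak{p})}R$ with $\colim_{s\notin\mathfrak{p}} R[1/s] \iso R_\mathfrak{p}$), after which absolute flatness enters only through $R_\mathfrak{p}\iso k(\mathfrak{p})$. You instead verify the identification by hand: you exhibit the candidate localisation triangle $\mathfrak{p}\to R\to k(\mathfrak{p})$, place $\mathfrak{p}$ in $\GGamma_{\mathcal{Z}(\mathfrak{p})}\D(R)$ by writing it as a directed union of idempotent-generated (hence compact, $\mathcal{Z}(\mathfrak{p})$-supported) ideals, check that $k(\mathfrak{p})$ is right orthogonal via the change-of-rings adjunction, and conclude by uniqueness of localisation triangles; the fully-faithfulness statement then comes from $\eta$ being a flat ring epimorphism. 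This is more self-contained and makes visible exactly where the structure of absolutely flat rings is used, at the cost of reproving a general fact in a special case; the paper's citation is shorter and makes clear that the only genuinely new content of the lemma is the equality $R_\mathfrak{p}=k(\mathfrak{p})$. One step you should state more carefully is the closure of localising subcategories under filtered homotopy colimits: for an arbitrary (not necessarily countable) directed union of modules this is true but needs a justification (e.g.\ that a filtered colimit of modules, being exact, is computed by a complex built from coproducts of the terms, so lies in the localising subcategory they generate), or you can sidestep it here since the directed system $\{eR\}$ consists of split inclusions of summands of $R$.
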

\begin{proof}
It is standard that $\mathcal{Z}(\mathfrak{p})$ is Thomason (it is also immediate from Lemma~\ref{lem_open_thomason}) and that the corresponding localisation is given by $\otimes_R R_\mathfrak{p}$ which identifies $R_\mathfrak{p} = k(\mathfrak{p})$ canonically with the corresponding Rickard idempotent.
\end{proof}

Thus, even though there are points which are not Thomason subsets, every point of $\Spc \D^\mathrm{perf}(R)$ is visible. So we can associate to every point $\mathfrak{p}$ a coproduct preserving endofunctor $\GGamma_\mathfrak{p}$. Here $\GGamma_\mathfrak{p}$ is just (yet) another notation for $k(\mathfrak{p})\otimes(-)$
\begin{equation}\label{eq_idempotent}
\GGamma_\mathfrak{p}R = \GGamma_{\Spec R} L_{\mathcal{Z}(\mathfrak{p})}R \iso k(\mathfrak{p}),
\end{equation}
but it is a conceptually helpful one. We see the support theory defined as in \cite{BaRickard}, which we have studied in Section~\ref{sec_nonsense} agrees with the homological support. From this point onward we will just write $\supp X$ for these coinciding notions of the support of an object $X$ of $\D(R)$.

\begin{rem}\label{rem_vis_defn}
We feel it is worth reiterating that this example shows there can exist points whose closure is not Thomason but which are visible and thus have associated idempotents. This shows the definition of visible point, as given in \cite{StevensonActions} (and here in Section~\ref{sec_nonsense}), is more general than the one in \cite{BaRickard}.
\end{rem}

We now investigate $\D(R)$ and, in particular, this support theory with a view toward comparison with rigidly-compactly generated tensor triangulated categories whose compact objects have noetherian spectrum (as considered in \cite{BIKStrat2} and \cite{StevensonActions}). More specifically we consider the analogues of the results of Neeman \cite{NeeChro}, namely the classification of localising subcategories and the telescope conjecture. It turns out that there is a dichotomy - the derived category either behaves very differently (and somewhat mysteriously) or behaves exactly as in the noetherian case where it is possible to completely understand the localising subcategories.

We begin with a few easy general statements; these results are essentially standard but we include most of the details for completeness.

\begin{lem}\label{lem_minimal}
For each $\mathfrak{p}\in \Spec R$ the localising subcategory $\D(k(\mathfrak{p}))$ is minimal in $\D(R)$ i.e., it has no proper non-trivial localising subcategories.
\end{lem}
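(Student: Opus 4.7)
The plan is to exploit the fact that $k(\mathfrak{p})$ is a field, so that $\D(k(\mathfrak{p}))$ is about as rigid as a triangulated category can be: every object splits as a coproduct of shifts of the tensor unit. Minimality will then follow from the fact that any non-zero localising subcategory of $\D(R)$ contained in $\D(k(\mathfrak{p}))$ is forced to contain $k(\mathfrak{p})$ itself, and $k(\mathfrak{p})$ generates all of $\D(k(\mathfrak{p}))$ as a localising subcategory.

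More concretely, let $\mathcal{L} \cie \D(k(\mathfrak{p}))$ be a non-zero localising subcategory of $\D(R)$, and pick a non-zero $X \in \mathcal{L}$. Working in $\D(k(\mathfrak{p}))$ (which is semisimple since $k(\mathfrak{p})$ is a field and every $k(\mathfrak{p})$-module is injective and projective), the object $X$ is isomorphic to $\bigoplus_{n\in \int} H^n(X)[-n]$, and each $H^n(X)$ is in turn a coproduct of copies of $k(\mathfrak{p})$. Since $X$ is non-zero some $H^n(X)$ is non-zero, so $k(\mathfrak{p})[-n]$ is a direct summand of $X$. As localising subcategories are thick and therefore closed under direct summands, this gives $k(\mathfrak{p}) \in \mathcal{L}$.

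To conclude, I need to show $k(\mathfrak{p})$ generates $\D(k(\mathfrak{p}))$ as a localising subcategory. But this is the usual fact that the tensor unit generates a rigidly-compactly generated tensor triangulated category, applied to $\D(k(\mathfrak{p}))$; alternatively, since every object of $\D(k(\mathfrak{p}))$ is a coproduct of shifts of $k(\mathfrak{p})$, it belongs to the smallest localising subcategory containing $k(\mathfrak{p})$. Hence $\mathcal{L} = \D(k(\mathfrak{p}))$, proving minimality.

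I do not foresee a real obstacle here; the only mild point is the appeal to the splitting of complexes over a field, but this is a standard fact. Everything else is routine manipulation of localising subcategories.
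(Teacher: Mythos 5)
Your argument is correct and is essentially the paper's own proof: both rest on the fact that every object of $\D(k(\mathfrak{p}))$ splits as a coproduct of suspensions of $k(\mathfrak{p})$ (you make this explicit via the cohomology decomposition over the field) together with closure of localising subcategories under direct summands, which forces any non-zero localising subcategory contained in $\D(k(\mathfrak{p}))$ to contain $k(\mathfrak{p})$ and hence be everything. No gaps to report.
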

\begin{proof}
Since $k(\mathfrak{p})$ is a field every object of $\D(k(\mathfrak{p}))$ is a sum of suspensions of $k(\mathfrak{p})$. As localising subcategories are closed under splitting idempotents we see that any non-zero localising subcategory of $\D(k(\mathfrak{p}))$ must be the whole category.
\end{proof}

The next lemma can be found, in a slightly more general form, as \cite{Neeholim}*{Lemma~2.17} and already appears in the work of Foxby \cite{Foxby}.

\begin{lem}\label{lem_res_split}
Let $X$ be an object of $\D(R)$. Then for $\mathfrak{p}\in \Spec R$ the object $X\otimes k(\mathfrak{p})$ is isomorphic to a coproduct of suspensions of $k(\mathfrak{p})$.
\end{lem}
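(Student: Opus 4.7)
The plan is to observe that $X\otimes k(\mathfrak{p})$ lies in the essential image of the fully faithful inclusion $\D(k(\mathfrak{p}))\hookrightarrow \D(R)$ established in the preceding lemma, and then invoke the fact that any complex of vector spaces over a field is formal.

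More precisely, first I would note that the derived (equivalently, ordinary, since $R$ is absolutely flat) tensor product $X\otimes_R k(\mathfrak{p})$ can be computed as $X\otimes_R R_\mathfrak{p}\otimes_{R_\mathfrak{p}} k(\mathfrak{p})$, so its underlying complex carries a natural $k(\mathfrak{p})$-action and can be viewed as an object of $\D(k(\mathfrak{p}))$ whose image in $\D(R)$ under the inclusion of the lemma is $X\otimes k(\mathfrak{p})$. Since it suffices to produce the required decomposition inside $\D(k(\mathfrak{p}))$, we are reduced to the following classical statement: every object of $\D(k(\mathfrak{p}))$ is isomorphic to the coproduct of the shifts of its cohomology groups.

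To justify this, for any complex $C^\bullet$ of $k(\mathfrak{p})$-vector spaces one chooses $k(\mathfrak{p})$-linear splittings of the short exact sequences
\begin{displaymath}
0\to Z^n\to C^n\to B^{n+1}\to 0, \qquad 0\to B^n\to Z^n\to H^n(C^\bullet)\to 0,
\end{displaymath}
which exist because every $k(\mathfrak{p})$-module is projective. These splittings yield a quasi-isomorphism between $C^\bullet$ and the complex $\bigoplus_n H^n(C^\bullet)[-n]$ with zero differential. Each cohomology group $H^n(X\otimes k(\mathfrak{p}))$ is then a $k(\mathfrak{p})$-vector space, hence (choosing a basis) a coproduct of copies of $k(\mathfrak{p})$, and assembling these decompositions gives the desired expression of $X\otimes k(\mathfrak{p})$ as a coproduct of suspensions of $k(\mathfrak{p})$.

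There is no real obstacle here; the only thing to be careful about is that $X$ is a priori unbounded and possibly has cohomology in infinitely many degrees, so the coproduct must be allowed to be infinite, which is fine in $\D(R)$ since it admits arbitrary coproducts. The appeal to the preceding lemma ensures that the decomposition obtained in $\D(k(\mathfrak{p}))$ transports to the same statement inside $\D(R)$.
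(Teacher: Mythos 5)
Your argument is correct. One remark on comparison: the paper does not actually prove this lemma at all; it simply cites a slightly more general splitting result (Lemma~2.17 of the B\"okstedt--Neeman homotopy limits paper, with the observation going back to Foxby), so your proposal supplies a self-contained proof where the paper defers to the literature. Your route is the natural one: since $R$ is absolutely flat, $X\otimes_R k(\mathfrak{p})$ is already the derived tensor product and is a complex of $k(\mathfrak{p})$-vector spaces, and the classical formality argument over a field (choosing $k(\mathfrak{p})$-linear sections of $Z^n\to H^n$, which give a quasi-isomorphism from $\bigoplus_n H^n[-n]$ with zero differential into the complex) splits it into shifts of its cohomology; choosing bases then writes each $H^n$ as a coproduct of copies of $k(\mathfrak{p})$. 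The only point worth making explicit is why the decomposition obtained over $k(\mathfrak{p})$ persists in $\D(R)$: restriction of scalars along $R\to k(\mathfrak{p})$ is exact and preserves coproducts, so it sends the quasi-isomorphism and the direct sum decomposition to the corresponding statements for the underlying complex of $R$-modules; your appeal to the fully faithful inclusion $\D(k(\mathfrak{p}))\hookrightarrow\D(R)$ of the preceding lemma covers this, though strictly speaking even full faithfulness is not needed, only exactness and preservation of coproducts. Your caution about unbounded complexes and infinite coproducts is well placed but harmless, as you note, since $\D(R)$ has arbitrary coproducts and they are computed degreewise.
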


From these two lemmas we deduce the following result.

\begin{lem}\label{lem_min_subcats}
There is a bijection between $\Spec R$ and the minimal non-zero localising subcategories of $\D(R)$ given by sending $\mathfrak{p}$ to the localising subcategory $\D(k(\mathfrak{p}))$ of $\D(R)$.
\end{lem}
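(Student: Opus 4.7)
Since $\D(k(\mathfrak{p}))$ is minimal non-zero by Lemma~\ref{lem_minimal}, the assignment $\mathfrak{p} \mapsto \D(k(\mathfrak{p}))$ is well-defined, so the task reduces to verifying injectivity and surjectivity. For injectivity, if $\D(k(\mathfrak{p})) = \D(k(\mathfrak{q}))$ then $k(\mathfrak{q})$ lies in $\D(k(\mathfrak{p}))$; writing it as a coproduct of suspensions of $k(\mathfrak{p})$ and tensoring with $k(\mathfrak{q})$, the right-hand side becomes a coproduct of suspensions of $k(\mathfrak{p})\otimes_R k(\mathfrak{q})$, which vanishes for $\mathfrak{p} \neq \mathfrak{q}$ because one can pick $x \in \mathfrak{p}\setminus\mathfrak{q}$ that is simultaneously zero in $k(\mathfrak{p})$ and invertible in $k(\mathfrak{q})$. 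The left-hand side, however, is $k(\mathfrak{q}) \neq 0$, forcing $\mathfrak{p} = \mathfrak{q}$. Alternatively, $\supp$ already separates the two subcategories by~(\ref{eq_idempotent}).

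For surjectivity, let $\mathcal{M}$ be a minimal non-zero localising subcategory and fix a non-zero object $X \in \mathcal{M}$, so that $\mathcal{M}$ coincides with the localising subcategory generated by $X$. Lemma~\ref{lem_detection} produces some $\mathfrak{p} \in \Spec R$ with $X \otimes k(\mathfrak{p}) \neq 0$. The central step, and the place where I expect the only real difficulty, is to show that $X \otimes k(\mathfrak{p})$ itself lies in $\mathcal{M}$. For this I would present $k(\mathfrak{p}) = R_\mathfrak{p}$ as a filtered colimit in $\D(R)$ of mapping telescopes $R[1/s]$ for $s \in R \setminus \mathfrak{p}$, each such telescope being the homotopy colimit of $R \xrightarrow{s} R \xrightarrow{s} R \to \cdots$. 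Tensoring this presentation with $X$ realises $X \otimes k(\mathfrak{p})$ as an iterated homotopy colimit of copies of $X$, built from $X$ using only coproducts and mapping cones, and therefore lying in the localising subcategory generated by $X$, namely $\mathcal{M}$.

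Once $X \otimes k(\mathfrak{p}) \in \mathcal{M}$, Lemma~\ref{lem_res_split} presents it as a non-zero coproduct of suspensions of $k(\mathfrak{p})$, so some suspension of $k(\mathfrak{p})$ is a direct summand. Closure of $\mathcal{M}$ under direct summands then delivers $k(\mathfrak{p}) \in \mathcal{M}$, whence $\D(k(\mathfrak{p}))$ (which is the localising subcategory generated by $k(\mathfrak{p})$) is contained in $\mathcal{M}$. Minimality of either subcategory then forces equality, completing the surjectivity check and establishing the bijection.
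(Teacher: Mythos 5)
Your overall strategy coincides with the paper's: injectivity is essentially immediate (your computation that $k(\mathfrak{p})\otimes_R k(\mathfrak{q})=0$ for $\mathfrak{p}\neq\mathfrak{q}$ is fine), and for surjectivity you pick a non-zero $X\in\mathcal{M}$, use Lemma~\ref{lem_detection} to find $\mathfrak{p}$ with $X\otimes k(\mathfrak{p})\neq 0$, then conclude via Lemma~\ref{lem_res_split}, closure under direct summands, and minimality. The one place you diverge is the step you yourself flag as the crux, namely that $X\otimes k(\mathfrak{p})\in\mathcal{M}$, and as written your justification has a gap. The telescope part is unproblematic: each $X\otimes R[1/s]$ is a sequential homotopy colimit of copies of $X$ and so lies in $\langle X\rangle$. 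But the passage from there to $X\otimes k(\mathfrak{p})\cong\colim_{s\notin\mathfrak{p}} X[1/s]$ is a filtered colimit over the multiplicative set $R\setminus\mathfrak{p}$, which is in general uncountable and not a chain, and such a colimit is not ``built from coproducts and mapping cones'' in any naive sense: the colimit is the cokernel, not the cone, of the difference map $\bigoplus_{s\to t} X[1/s]\to\bigoplus_{s} X[1/s]$, and the kernel of that map need not vanish. It is true that a filtered colimit of modules lies in the localising subcategory generated by its terms, but that is itself a theorem (proved, for instance, via simplicial replacement and the skeletal filtration, with exactness of filtered colimits guaranteeing that the realisation computes the colimit); it does not follow formally from closure under coproducts and triangles, so you would need to prove or cite it.

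The paper sidesteps this entirely with a one-line observation: since the tensor unit $R$ is a compact generator of $\D(R)$, every localising subcategory of $\D(R)$ is automatically a tensor ideal --- for $X\in\mathcal{M}$ the full subcategory $\{Y\in\D(R)\;\vert\;X\otimes Y\in\mathcal{M}\}$ is localising and contains $R$, hence is all of $\D(R)$. Applied to $Y=k(\mathfrak{p})$ this gives $X\otimes k(\mathfrak{p})\in\mathcal{M}$ immediately, after which your use of Lemma~\ref{lem_res_split}, splitting of idempotents, and minimality proceeds exactly as in the paper. So the defect is easily repaired, but as it stands the central step of your surjectivity argument rests on an unjustified claim about general filtered homotopy colimits.
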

\begin{proof}
Lemma~\ref{lem_minimal} associates a unique non-zero minimal localising subcategory to each $\mathfrak{p} \in \Spec R$. It remains to show that these are precisely the non-zero minimal localising subcategories. Suppose then that $\mathcal{L}$ is non-zero, localising and minimal in $\D(R)$. Let $X \in \mathcal{L}$ be a non-zero object. Since, by Lemma~\ref{lem_detection}, the support detects vanishing of objects there is a $\mathfrak{p}\in \Spec R$ with $X\otimes k(\mathfrak{p})$ non-zero. As every localising subcategory of $\D(R)$ is closed under the action, by tensoring, of $\D(R)$ Lemma~\ref{lem_res_split} implies $k(\mathfrak{p}) \in \mathcal{L}$ and hence $\D(k(\mathfrak{p})) \subseteq \mathcal{L}$. Minimality of $\mathcal{L}$ then forces this to be an equality.
\end{proof}



\subsection{Rings behaving badly}\label{sec_bad}

So far this actually seems very promising - the reader might hope that, since we have a seemingly well behaved support theory and know the minimal localising subcategories, we can describe the collection of all localising subcategories in terms of $\Spec R$. The next result should extinguish this hope, at least at our current level of generality (the reader who prefers the good news first should skip to Section \ref{sec_goodbehaviour}).

Let $S$ be a ring (not necessarily absolutely flat). We say $S$ is \emph{semi-artinian} if every non-zero homomorphic image of $S$, in the category of $S$-modules, contains a simple submodule. An $S$-module $M$ is said to be \emph{superdecomposable} if it admits no non-zero indecomposable direct summands. 

\begin{thm}\label{prop_gen_fail}
Let $R$ be an absolutely flat ring which is not semi-artinian. Then the residue fields do not generate $\D(R)$ i.e,
\begin{displaymath}
\langle k(\mathfrak{p})\; \vert \; \mathfrak{p}\in \Spec R\rangle \subsetneq \D(R).
\end{displaymath}
\end{thm}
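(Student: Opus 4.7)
The plan is to exhibit an object of $\D(R)$ outside $\mathcal{L} := \langle k(\mathfrak{p}) \mid \mathfrak{p}\in \Spec R\rangle$. Since $R$ is not semi-artinian, there is an ideal $I \subsetneq R$ such that $M := R/I$ is non-zero and admits no simple $R$-submodule; this $M$, viewed as a complex concentrated in degree zero, is my candidate.

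The strategy is to bound $\mathcal{L}$ from above by the full subcategory $\mathcal{T} \cie \D(R)$ consisting of objects isomorphic in $\D(R)$ to a coproduct $\bigoplus_{\mathfrak{p}, n} k(\mathfrak{p})^{(I_{\mathfrak{p},n})}[n]$ of shifted residue fields, and to observe that $M$ is not in $\mathcal{T}$. The key input is the Hom computation
\[
\Hom_{\D(R)}(k(\mathfrak{p})[n], k(\mathfrak{q})[m]) = \begin{cases} k(\mathfrak{p}) & \text{if } \mathfrak{p}=\mathfrak{q} \text{ and } n=m,\\ 0 & \text{otherwise.}\end{cases}
\]
The off-diagonal vanishing in degree zero comes from choosing $s\in \mathfrak{p}\setminus \mathfrak{q}$: it acts as $0$ on $k(\mathfrak{p})$ but as a unit on $k(\mathfrak{q})$. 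All higher $\Ext$ groups vanish because each $k(\mathfrak{q})$ is injective by Proposition~\ref{prop_af_rings}(v).

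I then verify that $\mathcal{T}$ is a localising subcategory. Closure under coproducts and suspensions is immediate. For triangles and summands, the Hom-vanishing forces any connecting map $C\to A[1]$ between objects of $\mathcal{T}$, and any idempotent $e\colon X\to X$ with $X\in\mathcal{T}$, to decompose as a direct sum indexed by the pairs $(\mathfrak{p},n)$; the analysis then reduces to triangles and idempotents inside $\D(k(\mathfrak{p}))$, where every object is visibly a coproduct of shifted copies of $k(\mathfrak{p})$. Since $\mathcal{T}$ contains each $k(\mathfrak{p})$, this gives $\mathcal{L}\cie \mathcal{T}$.

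To conclude: if $M \in \mathcal{T}$ then taking $H^0$ of the isomorphism $M \iso \bigoplus_{\mathfrak{p},n} k(\mathfrak{p})^{(I_{\mathfrak{p},n})}[n]$ yields an $R$-module isomorphism $M \iso \bigoplus_\mathfrak{p} k(\mathfrak{p})^{(I_{\mathfrak{p},0})}$, forcing $M$ to be a non-zero semi-simple $R$-module and in particular to contain a simple submodule, contradicting the choice of $I$. The most delicate step will be the blockwise decomposition establishing closure of $\mathcal{T}$ under triangles and summands; once the Ext-vanishing between residue fields is secured, this reduces to linear algebra over each $k(\mathfrak{p})$.
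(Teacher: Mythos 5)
Your Hom/Ext computation for a \emph{single} pair of shifted residue fields is correct, but the crucial step --- that any morphism between arbitrary coproducts of shifted residue fields decomposes blockwise, so that $\mathcal{T}$ is closed under cones and summands --- is not justified and is in fact false. The point is that $k(\mathfrak{p})$ is compact in $\D(R)$ only when $\{\mathfrak{p}\}$ is open in $\Spec R$, so $\Hom_{\D(R)}(k(\mathfrak{p})[n],-)$ need not commute with infinite coproducts; equivalently, although each $k(\mathfrak{q})$ is injective, an infinite direct sum of them need not be injective over the non-noetherian ring $R$, so the vanishing of $\Ext^{j}_R(k(\mathfrak{p}),k(\mathfrak{q}))$ for $j\geq 1$ does not propagate to $\Ext^{j}_R\bigl(k(\mathfrak{p}),\bigoplus_\beta k(\mathfrak{q}_\beta)\bigr)$. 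Concretely, let $R_0$ be the Boolean ring of eventually constant sequences in $\prod_{\mathbb{N}}\mathbb{F}_2$, with isolated primes $\mathfrak{p}_i$ and the limit prime $\mathfrak{p}_\infty=\bigoplus_i\mathbb{F}_2$. Then
\begin{displaymath}
0 \to \bigoplus_i k(\mathfrak{p}_i) \to R_0 \to k(\mathfrak{p}_\infty) \to 0
\end{displaymath}
is a non-split extension (an element of $R_0$ annihilated by every $e_i$ is zero), i.e.\ a nonzero map $k(\mathfrak{p}_\infty)[-1]\to\bigoplus_i k(\mathfrak{p}_i)$ all of whose ``blocks'' vanish, and its cone is $R_0$, which is not a coproduct of shifted residue fields since $R_0$ is not a semisimple module. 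Taking $R=R_0\times\prod_{\mathbb{N}}\mathbb{F}_2$ gives an absolutely flat, non-semi-artinian ring in which the same extension lives, so in the very setting of the theorem your $\mathcal{T}$ is not closed under cones and the containment $\mathcal{L}\subseteq\mathcal{T}$ fails; objects of $\mathcal{L}$ need not have semisimple cohomology, so the final $H^0$ argument has nothing to apply to. (Your first step is also slightly off: it is not enough that $M=R/I$ lie outside $\mathcal{L}$ ``because it has no simple submodule''; some further orthogonality is needed to separate it from $\mathcal{L}$.)

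The paper avoids all of this by working on the other side of the orthogonality: since $R$ is not semi-artinian, a theorem of Trlifaj provides a superdecomposable injective module $E$. Injectivity gives $\Ext^{j}_R(k(\mathfrak{p}),E)=0$ for $j\geq 1$, and a nonzero map $k(\mathfrak{p})\to E$ would be a split monomorphism (as $k(\mathfrak{p})$ is simple and injective), exhibiting an indecomposable summand of $E$; hence $\Hom_{\D(R)}(k(\mathfrak{p})[n],E)=0$ for all $\mathfrak{p}$ and $n$. Thus $0\neq E\in\mathcal{L}^{\perp}$, which forces $\mathcal{L}\subsetneq\D(R)$. Note that this uses only orthogonality against the single object $E$, so no control over maps into infinite coproducts is ever needed; if you want to salvage your approach, you should likewise look for a nonzero object right orthogonal to all residue fields rather than trying to describe $\mathcal{L}$ explicitly.
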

\begin{proof}
By a result of Trlifaj \cite{TrlifajSuperdecomposable} there exists a superdecomposable injective $R$-module $E$. We claim $E$ is right orthogonal to each $k(\mathfrak{p})$. As $E$ is injective the only other possibility is that $\Hom(k(\mathfrak{p}),E)$ is non-zero. But if there were a non-zero map $k(\mathfrak{p})\to E$ then, since the residue fields are simple, it would have to be a monomorphism. The module $k(\mathfrak{p})$ is injective (see Proposition~\ref{prop_af_rings}), so this morphism would then split and exhibit $k(\mathfrak{p})$ as an indecomposable direct summand of $E$ contradicting the superdecomposability of $E$.
\end{proof}

Combining this with Lemma~\ref{lem_min_subcats} we see, when $R$ is not semi-artinian, the minimal localising subcategories of $\D(R)$ do not generate it. In other words the local-to-global principle (see \cite{StevensonActions}*{Definition~6.1}), with respect to the homological support, fails. This  demonstrates that, in contrast to the case where the spectrum is noetherian (\cite{StevensonActions}*{Theorem~6.9}), the support detecting vanishing is not equivalent to the local-to-global principle in general. 


\begin{defn}
A localising subcategory $\mathcal{L}$ of $\D(R)$ is a \emph{Bousfield class} if there is an $X\in \D(R)$ such that
\begin{displaymath}
\mathcal{L} = \{Y\in \D(R) \; \vert \; X\otimes Y \iso 0\} = \ker(X\otimes(-)).
\end{displaymath}
The localising subcategory $\mathcal{L}$ is a \emph{cohomological Bousfield class} if there is an $X\in \D(R)$ such that
\begin{displaymath}
\mathcal{L} = \{Y\in \D(R) \; \vert \; \Hom(Y,\S^iX) = 0 \;\; \forall i\in \int\} = {}^\perp X.
\end{displaymath}
We call $\ker(X\otimes(-))$ the Bousfield class of $X$ and ${}^\perp X$ the cohomological Bousfield class of $X$.
\end{defn}

\begin{cor}\label{cor_not_bousfield}
If $R$ is not semi-artinian then $\D(R)$ admits a localising subcategory which is not a Bousfield class.
\end{cor}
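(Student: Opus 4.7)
The plan is to exhibit the localising subcategory generated by the residue fields as the desired example. Set
\[
\mathcal{L} = \langle k(\mathfrak{p}) \mid \mathfrak{p}\in \Spec R\rangle,
\]
which by Theorem \ref{prop_gen_fail} is a proper localising subcategory of $\D(R)$.

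I would argue by contradiction: suppose $\mathcal{L}$ were a Bousfield class, so that $\mathcal{L} = \ker(X\otimes (-))$ for some $X\in \D(R)$. Since each residue field $k(\mathfrak{p})$ lies in $\mathcal{L}$, we would have $X\otimes k(\mathfrak{p})\iso 0$ for every $\mathfrak{p}\in \Spec R$, i.e.\ $\supp X = \varnothing$. Lemma \ref{lem_detection} then forces $X\iso 0$, whence $\ker(X\otimes (-)) = \D(R)$. This contradicts $\mathcal{L}\subsetneq \D(R)$, so $\mathcal{L}$ cannot be a Bousfield class.

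There is no real obstacle here; the work has all been done in Theorem \ref{prop_gen_fail} (which supplies the proper containment via Trlifaj's superdecomposable injective) and Lemma \ref{lem_detection} (which supplies vanishing detection). The corollary is just the observation that these two facts are incompatible with $\mathcal{L}$ being cut out as the kernel of a tensor product.
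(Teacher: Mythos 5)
Your argument is correct and is essentially the paper's own proof, just organised as a contradiction: the paper likewise uses the properness of $\langle k(\mathfrak{p})\;\vert\;\mathfrak{p}\in\Spec R\rangle$ from Theorem~\ref{prop_gen_fail} to rule out the Bousfield class of $0$, and detection of vanishing (Lemma~\ref{lem_detection}) to rule out the Bousfield class of any non-zero $X$, since that class misses some $k(\mathfrak{p})$. Nothing is missing.
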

\begin{proof}
By the theorem $\mathcal{L} = \langle k(\mathfrak{p})\;\vert \; \mathfrak{p}\in \Spec R\rangle$ is a proper localising subcategory of $\D(R)$. In particular it is not the Bousfield class of $0$. As the support detects vanishing the Bousfield class of any non-zero $X\in \D(R)$ fails to contain some $k(\mathfrak{p})$. Thus $\mathcal{L}$ cannot be a Bousfield class.
\end{proof}

\begin{rem}
This gives a counterexample to the analogue of \cite{HP_Bousfield}*{Conjecture~9.1} for the derived category of a ring. 
\end{rem}

\begin{cor}
If $R$ is not semi-artinian then $\D(R)$ admits a cohomological Bousfield class which is not a Bousfield class.
\end{cor}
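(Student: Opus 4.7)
The plan is to produce the desired cohomological Bousfield class by promoting the orthogonality between residue fields and the superdecomposable injective already used in the proof of Theorem~\ref{prop_gen_fail}. Specifically, let $E$ be the superdecomposable injective $R$-module supplied by Trlifaj, viewed as an object of $\D(R)$ concentrated in degree zero, and consider
\begin{displaymath}
{}^\perp E = \{Y \in \D(R) \; \vert \; \Hom(Y,\S^i E) = 0 \;\; \forall i \in \int\}.
\end{displaymath}
By definition this is a cohomological Bousfield class, and it is a localising subcategory.

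First I would verify that $k(\mathfrak{p}) \in {}^\perp E$ for every $\mathfrak{p} \in \Spec R$. Since $E$ is an injective $R$-module, $\Hom_{\D(R)}(k(\mathfrak{p}), \S^i E) \iso \Ext^i_R(k(\mathfrak{p}), E)$ vanishes for $i \neq 0$, and the argument in the proof of Theorem~\ref{prop_gen_fail} shows $\Hom_R(k(\mathfrak{p}), E) = 0$ because a non-zero such map would split and exhibit $k(\mathfrak{p})$ as an indecomposable summand of $E$. Next, ${}^\perp E$ is a \emph{proper} subcategory of $\D(R)$, since $E$ itself does not lie in ${}^\perp E$.

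Finally, I would argue that ${}^\perp E$ is not a Bousfield class. If it were, say ${}^\perp E = \ker(X \otimes (-))$ for some $X \in \D(R)$, then from $k(\mathfrak{p}) \in {}^\perp E$ for every $\mathfrak{p}$ we would obtain $X \otimes k(\mathfrak{p}) \iso 0$ for all $\mathfrak{p}$, i.e.\ $\supp X = \varnothing$. Lemma~\ref{lem_detection} would then force $X \iso 0$ and hence ${}^\perp E = \D(R)$, contradicting properness.

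I do not anticipate a serious obstacle here; the whole argument rests on two already-established ingredients, namely the existence of $E$ and the fact that the homological support detects vanishing. The only mild subtlety is checking that the $\Ext^i$ vanishing genuinely forces $k(\mathfrak{p}) \in {}^\perp E$, which is immediate from injectivity of $E$ as an $R$-module together with the fact that morphisms in $\D(R)$ out of a module into a shifted injective module compute $\Ext$.
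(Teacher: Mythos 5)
Your argument is correct and is essentially the paper's own proof: both take the cohomological Bousfield class ${}^\perp E$ of a Trlifaj superdecomposable injective, observe that all residue fields lie in it while it remains proper, and then use Lemma~\ref{lem_detection} to rule out its being a Bousfield class. The extra details you supply (the $\Ext$-vanishing against the injective $E$ and the properness via $E\notin{}^\perp E$) are exactly what the paper leaves implicit.
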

\begin{proof}
As in the proof of the theorem there is, by \cite{TrlifajSuperdecomposable}, a superdecomposable injective $E$ and
\begin{displaymath}
\langle k(\mathfrak{p}) \; \vert \; \mathfrak{p} \in \Spec R \rangle \cie {}^\perp E.
\end{displaymath}
It is clear that ${}^\perp E$ is a proper localising subcategory of $\D(R)$ and so, by Lemma~\ref{lem_detection}, ${}^\perp E$ cannot be a Bousfield class as in the argument proving the last Corollary.
\end{proof}

\begin{rem}
This gives a counterexample to the analogue of \cite{Hovey_CBC}*{Conjecture~1.2} for the derived category of a ring.
\end{rem}

\begin{ex}
Let $\Lambda$ be an infinite index set and for each $\l\in \Lambda$ let $k_\l$ be a field. The ring
\begin{displaymath}
R = \prod_{\l \in \Lambda} k_\l
\end{displaymath}
is absolutely flat. By \cite{ClarkSmith}*{Lemma~1} it is not semi-artinian and so the theorem and both corollaries apply. In particular, the non-zero minimal localising subcategories of $\D(R)$ (which correspond to ultrafilters on $\Lambda$) do not generate $\D(R)$ and the local-to-global principle for the action of $\D(R)$ on itself fails.
\end{ex}




\subsection{Some good behaviour}\label{sec_goodbehaviour}
We now use the abstract nonsense of Section~\ref{sec_nonsense} to show that for a certain class of absolutely flat rings one does not observe the same interesting behaviour as in the last section. 

Let $S$ be a commutative unital ring such that $\Spec S$ is a noetherian topological space, for instance $S$ could be a noetherian ring. Since $\Spec S$ is noetherian every point of $\Spc D^\mathrm{perf}(S) \iso \Spec S$ is visible. Set $R= S^\mathrm{abs}$ and denote by $f\colon S\to R$ the canonical map (see Theorem~\ref{thm_af_adj} for details). Our first aim is to prove that $\Spec S$ being noetherian implies $\D(R)$ is generated by the residue fields of $R$.

The first lemma we need is just an application of our general nonsense to the exact, monoidal, and compact object preserving functor $\L f^*\colon \D(S) \to \D(R)$.

\begin{lem}\label{lem_idempotent_computation}
Let $\mathfrak{p}\in \Spec S$ and denote by $P$ the unique point of $\Spec R$ such that $f^{-1}P = \mathfrak{p}$. Then
\begin{displaymath}
\L f^*(\GGamma_\mathfrak{p}S) \iso k(P).
\end{displaymath}
\end{lem}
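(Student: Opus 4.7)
The plan is to recognize this as a direct application of the base change machinery developed in Section~\ref{sec_nonsense}, applied to the functor $\L f^*\colon \D(S)\to \D(R)$. First I would verify the required hypotheses: since $R$ is flat over $S$ (indeed every $S$-module is flat when viewed through $f$, or one can simply note that $\L f^* = R\otimes_S^{\mathbf{L}}(-)$ with $R$ flat), the functor $\L f^*$ is an exact, monoidal, coproduct-preserving, and compact-object-preserving functor between rigidly-compactly generated tensor triangulated categories, placing us exactly in the setup of Section~\ref{sec_basechange}.

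Next, I would invoke Theorem~\ref{thm_af_adj}, which tells us that the induced map of spectra $\Spc(\L f^*)^c\colon \Spec R\to \Spec S$ is a bijection (namely $\eta^{-1}$). In particular, the fibre over $\mathfrak{p}\in \Spec S$ is the single point $\{P\}$. Since $\Spec S$ is noetherian the point $\mathfrak{p}$ is visible in $\Spc \D^\mathrm{perf}(S)$, and we already noted that every point of $\Spc \D^\mathrm{perf}(R)\iso \Spec R$ is visible. We are therefore in the situation of Remark~\ref{rem_vis_idemp}, which yields
\begin{displaymath}
\L f^*(\GGamma_\mathfrak{p}S) \iso \GGamma_P R.
\end{displaymath}

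Finally I would identify the right-hand side with $k(P)$ using equation~(\ref{eq_idempotent}), which gives $\GGamma_P R \iso k(P)$. Composing the two isomorphisms gives the desired conclusion $\L f^*(\GGamma_\mathfrak{p}S)\iso k(P)$.

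The proof is essentially a bookkeeping exercise: the real work has already been done in Proposition~\ref{prop_idempotent_basechange} and Proposition~\ref{prop_point_fibre}, together with the explicit description of the Rickard idempotents over an absolutely flat ring. The only mild subtlety, and the one place I would take care, is confirming that the fibre $f^{-1}(\mathfrak{p})$ consists of a single point so that Remark~\ref{rem_vis_idemp} applies in its sharpest form (producing an isomorphism to $\GGamma_P R$ rather than only a statement about supports); but this is immediate from Theorem~\ref{thm_af_adj}.
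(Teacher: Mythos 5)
Your argument is correct and is essentially the paper's own proof: the paper simply unwinds Remark~\ref{rem_vis_idemp} by hand, writing $\GGamma_\mathfrak{p}S = \GGamma_{\mathcal{V}(\mathfrak{p})}S\otimes L_{\mathcal{Z}(\mathfrak{p})}S$, applying Proposition~\ref{prop_idempotent_basechange}, using the bijectivity of the map on spectra from Theorem~\ref{thm_af_adj} to get $f^{-1}\mathcal{V}(\mathfrak{p})\setminus f^{-1}\mathcal{Z}(\mathfrak{p})=\{P\}$, and then invoking independence of the defining Thomason subsets together with equation~(\ref{eq_idempotent}) --- exactly the content you package via the remark. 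One caveat: your justification of the hypotheses of Section~\ref{sec_basechange} via flatness of $R$ over $S$ is incorrect, since the absolutely flat approximation is in general \emph{not} flat over $S$ (for instance $\mathbb{Z}^{\mathrm{abs}}$ has $p$-torsion: $p(1-px_p)=0$ while $1-px_p\neq 0$ because $p$ is not invertible at the prime over $(p)$); this is harmless, however, because for an arbitrary ring homomorphism the derived pullback $\L f^* = R\otimes^{\mathbf{L}}_S(-)$ is already exact, monoidal, coproduct-preserving and sends perfect complexes to perfect complexes, which is all the setup requires.
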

\begin{proof}
As in the statement let $\mathfrak{p}$ be a point of $\Spec S$ and $P$ be the corresponding point of $\Spec R$.  We are in the situation of Section~\ref{sec_nonsense} so we may apply Proposition~\ref{prop_idempotent_basechange} to obtain isomorphisms,
\begin{align*}
\L f^*(\GGamma_\mathfrak{p}S) &= \L f^*(\GGamma_{\mathcal{V}(\mathfrak{p})}S \otimes L_{\mathcal{Z}(\mathfrak{p})}S)\\
& \iso \L f^*(\GGamma_{\mathcal{V}(\mathfrak{p})}S) \otimes \L f^*(L_{\mathcal{Z}(\mathfrak{p})}S)\\
&\iso \GGamma_{f^{-1}\mathcal{V}(\mathfrak{p})}R \otimes L_{f^{-1}\mathcal{Z}(\mathfrak{p})}R.
\end{align*}
As $f$ is a bijection we have 
\begin{displaymath}
f^{-1}\mathcal{V}(\mathfrak{p}) \setminus f^{-1}\mathcal{Z}(\mathfrak{p}) = \{P\}.
\end{displaymath}
This yields isomorphisms
\begin{displaymath}
\GGamma_{f^{-1}\mathcal{V}(\mathfrak{p})}R \otimes L_{f^{-1}\mathcal{Z}(\mathfrak{p})}R \iso \GGamma_P R \iso k(P),
\end{displaymath}
the first by the independence of $\GGamma_P R$ on the subsets used to define it (see \cite{BaRickard}*{Corollary~7.5}) and the second by the observation of equation~\ref{eq_idempotent}, completing the proof.
\end{proof}

\begin{rem}
Assuming the spectrum of $S$ is noetherian is not essential for the lemma provided one corrects the statement by considering only visible points (i.e., the points for which the statement makes sense).
\end{rem}


Now we invoke the hypothesis that $\Spec S$ is noetherian. As we have mentioned earlier $\D(S)$ is always a rigidly-compactly generated tensor triangulated category with a monoidal model and, by a result of Thomason \cite{Thomclass}, $\Spc \D^{\mathrm{perf}}(S)$ is canonically homeomorphic to $\Spec S$. Thus, as $\Spec S$ is noetherian, the hypotheses of \cite{StevensonActions}*{Theorem~6.9} apply: the local-to-global principle holds for the action of $\D(S)$ on itself and the support detects vanishing. In particular we have equalities
\begin{equation}\label{eq_generation}
\D(S) = \langle S \rangle = \langle \GGamma_\mathfrak{p}S \; \vert \; \mathfrak{p}\in \Spec S\rangle.
\end{equation}
Combined with the lemma this has the following consequence.

\begin{prop}\label{prop_gen_win}
The derived category $\D(R)$ is generated by the residue fields of $R$ i.e.,
\begin{displaymath}
\D(R) = \langle k(P) \; \vert \; P\in \Spec R\rangle.
\end{displaymath}
\end{prop}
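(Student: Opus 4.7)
The plan is to transfer the generation statement from $\D(S)$ across to $\D(R)$ using the monoidal left adjoint $\L f^*$. Since $\Spec S$ is noetherian, equation~\eqref{eq_generation} tells us that $\D(S) = \langle \GGamma_\mathfrak{p}S \; \vert \; \mathfrak{p}\in \Spec S\rangle$; in particular $S$ itself belongs to the localising subcategory generated by the Rickard idempotents on the prime spectrum.

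The key step is to consider the full subcategory
\begin{displaymath}
\mathcal{M} = \{X\in \D(S) \; \vert \; \L f^*X \in \langle k(P) \; \vert \; P\in \Spec R\rangle\}.
\end{displaymath}
Because $\L f^*$ is a left adjoint it preserves coproducts, and being exact it preserves distinguished triangles and suspensions; since localising subcategories are closed under summands it is straightforward to check that $\mathcal{M}$ is a localising subcategory of $\D(S)$. Lemma~\ref{lem_idempotent_computation} provides, for each $\mathfrak{p}\in \Spec S$, the isomorphism $\L f^*(\GGamma_\mathfrak{p}S) \iso k(P)$ with $P$ the unique point of $\Spec R$ above $\mathfrak{p}$, so $\GGamma_\mathfrak{p}S \in \mathcal{M}$ for every $\mathfrak{p}\in\Spec S$. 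Applying \eqref{eq_generation} we conclude $\mathcal{M} = \D(S)$.

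In particular $S \in \mathcal{M}$, whence $R = \L f^*S$ lies in $\langle k(P) \; \vert \; P\in \Spec R\rangle$. Since $R$ is a compact generator of $\D(R)$ we have $\D(R) = \langle R\rangle$, and therefore
\begin{displaymath}
\D(R) = \langle R\rangle \cie \langle k(P) \; \vert \; P\in \Spec R\rangle \cie \D(R),
\end{displaymath}
giving the claimed equality. The only real content of the argument is the idempotent base-change identity already established in Lemma~\ref{lem_idempotent_computation}; once that is in hand the proof is essentially formal, and the mild subtlety is simply noting that generation of $\D(S)$ by the $\GGamma_\mathfrak{p}S$ propagates through $\L f^*$ via the localising test subcategory $\mathcal{M}$.
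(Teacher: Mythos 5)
Your proof is correct and follows essentially the same route as the paper: the paper's phrase ``by a standard argument'' is exactly the test-subcategory argument you spell out with $\mathcal{M}$, combined with Lemma~\ref{lem_idempotent_computation} and the fact that $R$ generates $\D(R)$. No discrepancies.
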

\begin{proof}
By (\ref{eq_generation}) $S$ can be built from the $\GGamma_\mathfrak{p}S$ so by a standard argument
\begin{displaymath}
R \iso \L f^* S \in \langle \L f^*(\GGamma_\mathfrak{p}S) \; \vert \; \mathfrak{p}\in \Spec S \rangle.
\end{displaymath}
Applying the last lemma this says $R$ lies in the localising subcategory $\langle k(P) \; \vert \; P\in \Spec R \rangle$. This proves the proposition as any localising subcategory of $\D(R)$ containing $R$ must be all of $\D(R)$.
\end{proof}

\begin{cor}
If $S$ is a ring with noetherian spectrum then $R= S^\mathrm{abs}$ is semi-artinian.
\end{cor}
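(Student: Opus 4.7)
The plan is to argue by contrapositive, directly leveraging Proposition~\ref{prop_gen_win} and Theorem~\ref{prop_gen_fail}. Since $R = S^{\mathrm{abs}}$ is absolutely flat by construction, it is either semi-artinian or not, and Theorem~\ref{prop_gen_fail} pins down a precise diagnostic: if $R$ is \emph{not} semi-artinian, then the residue fields $\{k(P) \mid P \in \Spec R\}$ fail to generate $\D(R)$ as a localising subcategory. Proposition~\ref{prop_gen_win}, on the other hand, establishes exactly the opposite conclusion under the noetherianity hypothesis on $\Spec S$, namely that $\D(R) = \langle k(P) \mid P \in \Spec R\rangle$.

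Concretely, I would open the proof by noting that $R$ is absolutely flat, so Theorem~\ref{prop_gen_fail} applies to it. If $R$ were not semi-artinian, the theorem would give a strict inclusion $\langle k(P) \mid P \in \Spec R\rangle \subsetneq \D(R)$, directly contradicting the equality supplied by Proposition~\ref{prop_gen_win}. Hence $R$ must be semi-artinian.

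There is really no obstacle to overcome here: the two preceding results were designed to fit together exactly this way, and the corollary is simply the logical consequence of pairing the positive generation statement (available because $\Spec S$ is noetherian, hence all relevant support-theoretic machinery of \cite{StevensonActions}*{Theorem~6.9} is in force) with the Trlifaj-based obstruction in Theorem~\ref{prop_gen_fail}. The only thing to double-check is that the hypotheses of Theorem~\ref{prop_gen_fail} are indeed satisfied by $R = S^{\mathrm{abs}}$, i.e.\ that $R$ is commutative absolutely flat; this is immediate from Theorem~\ref{thm_af_adj}. The proof should therefore be no more than two or three lines.
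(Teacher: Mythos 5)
Your argument is correct and is essentially the paper's own proof: both combine Proposition~\ref{prop_gen_win} (the residue fields generate $\D(R)$ when $\Spec S$ is noetherian) with the contrapositive of Theorem~\ref{prop_gen_fail} to conclude $R$ must be semi-artinian. Nothing further is needed.
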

\begin{proof}
We have proved in Theorem~\ref{prop_gen_fail} that if $R$ is not semi-artinian the residue fields do not generate $\D(R)$, so, given the theorem, $R$ had better be semi-artinian.
\end{proof}

\begin{rem}
One can also prove the corollary using more typical methods. If $S$ has noetherian spectrum then a straightforward computation shows the Cantor-Bendixson rank of $\Spec S^\mathrm{abs}$ is at most $\omega$.
\end{rem}

In fact we could have proved the proposition by more general methods. The monoidal functor $\L f^*$ furnishes us with an action of $\D(S)$ on $\D(R)$ as defined in \cite{StevensonActions}. As we have noted above \cite{StevensonActions}*{Theorem~6.9} guarantees this action satisfies the local-to-global principle. Furthermore, we have shown in Lemma \ref{lem_idempotent_computation} that for $\mathfrak{p}\in \Spec S \iso \Spc \D^\mathrm{perf}(S)$ the object $\GGamma_\mathfrak{p}S$ acts on $\D(R)$ as $k(P)$ where $P$ is the corresponding point of $\Spec R$. Thus the support obtained from the action of $\D(S)$ on $\D(R)$ agrees with the homological support on $\D(R)$. This essentially proves the following lemma.

\begin{lem}
The homological support on $\D(R)$ satisfies the local-to-global principle i.e., for $A\in \D(R)$
\begin{displaymath}
\langle A \rangle = \langle k(P) \; \vert \; P\in \supp A\rangle.
\end{displaymath}
\end{lem}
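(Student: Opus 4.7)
The plan is to transfer the local-to-global principle from the self-action of $\D(S)$ to the action of $\D(S)$ on $\D(R)$ induced by $\L f^*$, and then translate the resulting identity into a statement about the homological support on $\D(R)$. By \cite{StevensonActions}*{Theorem~6.9}, since $\Spec S$ is noetherian, the action $(X,B)\mapsto \L f^*(X)\otimes_R B$ of $\D(S)$ on $\D(R)$ satisfies the local-to-global principle. Writing $\langle - \rangle_\ast$ for the smallest localising subcategory of $\D(R)$ closed under this action, we obtain for each $A\in \D(R)$
\begin{displaymath}
\langle A \rangle_\ast = \langle \L f^*(\GGamma_\mathfrak{p}S)\otimes_R A \mid \mathfrak{p}\in \Spec S\rangle_\ast.
\end{displaymath}

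Next I would rewrite both sides as ordinary localising subcategories. By Lemma~\ref{lem_idempotent_computation}, $\L f^*(\GGamma_\mathfrak{p}S)\otimes_R A \iso k(P)\otimes_R A$ where $P$ is the unique point of $\Spec R$ lying over $\mathfrak{p}$ under the bijection of Theorem~\ref{thm_af_adj}. By Lemma~\ref{lem_res_split} this object is either zero (precisely when $P\notin \supp A$) or a nonzero coproduct of suspensions of $k(P)$, in which case Lemma~\ref{lem_minimal} forces it to generate $\D(k(P)) = \langle k(P)\rangle$. Discarding the zero terms and collapsing the generators yields
\begin{displaymath}
\langle A \rangle_\ast = \langle k(P) \mid P\in \supp A\rangle_\ast.
\end{displaymath}

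Finally I would verify that the subscript $\ast$ can be dropped on both sides, which is the one point requiring any care. For the right-hand side, Lemma~\ref{lem_res_split} gives $B\otimes_R k(P)\in \langle k(P)\rangle$ for every $B\in \D(R)$, so $\langle k(P)\mid P\in \supp A\rangle$ is already a $\D(S)$-submodule. For the left-hand side, fix $B\in \langle A\rangle$ and consider the class $\{X\in \D(S) \mid \L f^*(X)\otimes_R B \in \langle A\rangle\}$; it is a localising subcategory of $\D(S)$, and it contains $S$ because $\L f^*(S)\otimes_R B = B$, so since $\D(S) = \langle S\rangle$ it exhausts $\D(S)$. This shows $\langle A\rangle$ is itself a $\D(S)$-submodule, whence $\langle A\rangle_\ast = \langle A\rangle$. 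Combining these three identifications proves the lemma. The main obstacle is precisely this last bookkeeping — reconciling the ``submodule'' flavour of the LGP statement with the plain localising subcategories appearing in the conclusion — and it is handled uniformly by the fact that the $\D(S)$-action on $\D(R)$ factors through tensoring in $\D(R)$ together with the single generator $S$ of $\D(S)$.
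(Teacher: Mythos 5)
Your proposal is correct and follows essentially the same route as the paper: invoke the local-to-global principle of \cite{StevensonActions}*{Theorem~6.9} for the $\D(S)$-action on $\D(R)$ induced by $\L f^*$, and use Lemma~\ref{lem_idempotent_computation} to identify the action of $\GGamma_\mathfrak{p}S$ with $k(P)\otimes(-)$, so that this support coincides with the homological support. The only difference is that you spell out the bookkeeping the paper leaves implicit, namely that localising submodules coincide with plain localising subcategories (via $\D(S)=\langle S\rangle$ and the action factoring through tensoring in $\D(R)$) and that the generators $k(P)\otimes A$ may be replaced by $k(P)$ for $P\in\supp A$ using Lemmas~\ref{lem_res_split} and~\ref{lem_minimal}.
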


We thus deduce a classification theorem for localising subcategories of $\D(R)$.

\begin{thm}\label{thm_class}
There is an order preserving bijection
\begin{displaymath}
\left\{ \begin{array}{c}
\text{subsets of}\; \Spec R 
\end{array} \right\}
\xymatrix{ \ar[r]<1ex>^{\tau} \ar@{<-}[r]<-1ex>_{\sigma} &} \left\{
\begin{array}{c}
\text{localising subcategories of} \; \D(R) \\
\end{array} \right\}, 
\end{displaymath}
where for a localising subcategory $\mathcal{L}$ and a subset $W\cie \Spec R$ we set
\begin{displaymath}
\sigma(\mathcal{L}) = \{ P\in \Spec R \; \vert \; k(P)\otimes \mathcal{L}\neq 0\} \quad \text{and} \quad \tau(W) = \langle k(P) \; \vert \; P\in W\rangle.
\end{displaymath}
\end{thm}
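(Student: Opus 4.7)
The plan is to verify the two round-trip identities $\sigma\tau = \id$ and $\tau\sigma = \id$; order preservation of both maps is visible from their definitions, so monotonicity combined with mutual inversion yields the desired bijection.

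For $\sigma\tau = \id$ on subsets, I would first record the computation that $k(P)\otimes k(Q) = 0$ whenever $P\neq Q$ in $\Spec R$: choosing any $r \in R$ lying in the symmetric difference of $P$ and $Q$, its image in one residue field is zero while its image in the other is invertible, and this forces $1\otimes 1 = 0$ in the tensor product. Granted this, if $W\cie \Spec R$ and $P\notin W$ the subcategory $\ker(k(P)\otimes(-))$ is localising and contains every generator of $\tau(W)$, hence contains $\tau(W)$ itself, so $P\notin \sigma(\tau(W))$. The opposite containment is immediate: if $P\in W$ then $k(P)\in \tau(W)$ and $k(P)\otimes k(P) \iso k(P)\neq 0$.

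For $\tau\sigma = \id$ on localising subcategories, the essential input is the local-to-global principle established in the lemma immediately preceding the theorem, namely $\langle X\rangle = \langle k(P)\; \vert\; P\in \supp X\rangle$ for every $X\in \D(R)$. Given a localising subcategory $\mathcal{L}$ and any $X\in \mathcal{L}$, one has $\supp X\cie \sigma(\mathcal{L})$ directly from the definitions, so the principle gives $\langle X\rangle \cie \tau(\sigma(\mathcal{L}))$ and hence $\mathcal{L}\cie \tau(\sigma(\mathcal{L}))$. For the reverse inclusion, each $P\in \sigma(\mathcal{L})$ admits some $X\in \mathcal{L}$ with $P\in \supp X$, so the principle places $k(P)\in \langle X\rangle \cie \mathcal{L}$; it follows that $\tau(\sigma(\mathcal{L}))\cie \mathcal{L}$.

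Essentially all of the real work has been pushed upstream into Proposition~\ref{prop_gen_win} and the local-to-global principle just cited, so I do not anticipate a serious obstacle in the proof itself. The only genuinely new piece of input is the orthogonality of distinct residue fields under tensor product, which is a one-line consequence of absolute flatness; everything else is formal unwinding of the two definitions against the local-to-global principle.
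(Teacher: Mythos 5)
Your proof is correct, and it follows the same overall strategy as the paper: order preservation is immediate, and everything is reduced to the two identities $\sigma\tau=\id$ and $\tau\sigma=\id$, with the local-to-global lemma preceding the theorem carrying the weight. The one point where you diverge is that the paper disposes of $\sigma\tau=\id$ by citing Proposition~6.3 of the tensor actions paper (\cite{StevensonActions}), whereas you verify it directly from the orthogonality $k(P)\otimes k(Q)=0$ for $P\neq Q$ — which is exactly the content of that proposition in this situation, and your one-line argument (an element of $P\setminus Q$ or $Q\setminus P$ is killed in one residue field and invertible in the other, forcing $1\otimes 1=0$) is valid since all primes of $R$ are maximal and all modules are flat, so no derived correction is needed. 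For the inclusion $\tau\sigma(\mathcal{L})\cie\mathcal{L}$ the paper argues via the fact that every localising subcategory of $\D(R)$ is a tensor ideal (together with Lemma~\ref{lem_res_split}, which splits $k(P)\otimes X$ into suspensions of $k(P)$), while you instead extract $k(P)\in\langle X\rangle$ from the local-to-global principle; both are fine, and your version is marginally more uniform at the cost of invoking the stronger lemma where a softer closure argument suffices. Net effect: your write-up is a touch more self-contained than the paper's, with no gap.
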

\begin{proof}
The map $\tau$ is a split monomorphism with left inverse $\sigma$ by \cite{StevensonActions}*{Proposition~6.3}. Thus to prove the result we just need to observe that
\begin{align*}
\tau\sigma(\mathcal{L}) &= \tau(\{P \in \Spec R \; \vert \; k(P)\otimes \mathcal{L} \neq 0\})\\
&= \langle k(P) \; \vert \; k(P) \otimes \mathcal{L} \neq 0\rangle
\,
\end{align*}
which is precisely $\mathcal{L}$: $\tau\sigma(\mathcal{L}) \cie \mathcal{L}$ is immediate as every localising subcategory of $\D(R)$ is a $\otimes$-ideal and $\mathcal{L} \cie \tau\sigma(\mathcal{L})$ is a straightforward consequence of the last lemma.
\end{proof}

Thus Neeman's classification \cite{NeeChro} extends to the absolutely flat approximations of rings with noetherian spectrum. Given this classification it is natural to ask if we can settle the telescope conjecture in this setting. It turns out to be an easy consequence of the formal results we have proved in Section~\ref{sec_smashing}.

\begin{thm}\label{thm_telescope}
The telescope conjecture holds for $\D(R)$ i.e., every smashing subcategory of $\D(R)$ is generated by compact objects.
\end{thm}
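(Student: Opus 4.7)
The plan is to leverage the classification of localising subcategories (Theorem~\ref{thm_class}) together with the support-theoretic obstruction from Proposition~\ref{prop_procons}: for a smashing subcategory $\mcS$ of $\D(R)$, I will show that the associated subset $\sigma(\mcS)\cie \Spec R$ is open, hence Thomason by Lemma~\ref{lem_open_thomason}, and then recognise $\mcS$ as the localising subcategory generated by the corresponding compact objects. Since every localising subcategory of $\D(R)$ is a tensor ideal (as observed in the proof of Theorem~\ref{thm_class}), $\mcS$ is a smashing tensor ideal and the results of Section~\ref{sec_smashing} are applicable.

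The main step is to show that $\sigma(\mcS)$ and $\sigma(\mcS^\perp)$ partition $\Spec R$. For each $P\in\Spec R$, applying $k(P)\otimes -$ to the localisation triangle $I_*I^!k(P) \to k(P) \to J_*J^*k(P)$, combined with sub-additivity of support on triangles and the equality $\supp k(P) = \{P\}$, forces $P$ to lie in $\supp I_*I^!k(P)$ or $\supp J_*J^*k(P)$, hence in $\sigma(\mcS)$ or $\sigma(\mcS^\perp)$. Disjointness follows from $\mcS \intersec \mcS^\perp = 0$ together with the observation that for any localising subcategory $\mathcal{L}$, if $P\in\sigma(\mathcal{L})$ then $k(P)\in\mathcal{L}$: by Lemma~\ref{lem_res_split} the object $k(P)\otimes X$ is a coproduct of suspensions of $k(P)$ for any $X\in\mathcal{L}$, so if it is non-zero we may split off a copy of $k(P)$. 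Consequently $\Spec R \setminus \sigma(\mcS) = \sigma(\mcS^\perp) = \supp \mcS^\perp$.

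Since every point of $\Spec R$ is visible, Proposition~\ref{prop_procons} gives that $\supp \mcS^\perp$ is proconstructible in $\Spec R$. But $\Spec R$ is Hausdorff, so its original topology coincides with its constructible topology and proconstructible subsets are simply closed subsets. Therefore $W := \sigma(\mcS)$ is open, hence Thomason by Lemma~\ref{lem_open_thomason}. The localising subcategory $\GGamma_W\D(R)$ generated by the compact objects in $\D^\mathrm{perf}(R)_W$ satisfies $\sigma(\GGamma_W\D(R)) = W = \sigma(\mcS)$, so Theorem~\ref{thm_class} gives $\mcS = \GGamma_W\D(R)$, which is visibly compactly generated. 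The main obstacle is the first step of partitioning $\Spec R$ via $\sigma(\mcS)$ and $\sigma(\mcS^\perp)$; once that is in hand, the rest is a short sequence of invocations of Section~\ref{sec_smashing} and the classification theorem.
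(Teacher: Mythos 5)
Your proposal is correct and takes essentially the same route as the paper: classify $\mcS$ and $\mcS^\perp$ via $\sigma$, note every localising subcategory is a tensor ideal so Proposition~\ref{prop_procons} applies, use that the topology on $\Spec R$ coincides with its constructible topology to conclude $\sigma(\mcS^\perp)$ is closed, hence $\sigma(\mcS)$ is open and Thomason, and then identify $\mcS$ with the compactly generated subcategory $\GGamma_{\sigma(\mcS)}\D(R)$ via Theorem~\ref{thm_class}. The only difference is that you supply an explicit argument (via the localisation triangle and Lemma~\ref{lem_res_split}) for the partition $\sigma(\mcS) = \Spec R \setminus \sigma(\mcS^\perp)$, which the paper simply cites as an easy check.
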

\begin{proof}
Suppose $\mcS$ is a smashing subcategory of $\D(R)$ and let $\mcS^\perp$ be its right orthogonal, which is also a localising subcategory. By the theorem $\mcS$ and $\mcS^\perp$ are determined by the subsets $\sigma(\mcS)$ and $\sigma(\mcS^\perp)$ of $\Spec R$. One can easily check
\begin{displaymath}
\sigma \mcS = \Spec R \setminus \sigma \mcS^\perp
\end{displaymath}
(cf.\ \cite{StevensonActions}*{Lemma~7.13}). By Proposition~\ref{prop_procons} the subset $\sigma \mcS^\perp$ is proconstructible in $\Spec R$ and hence is closed as $\Spec R$ is already equipped with the constructible topology. Thus $\sigma \mcS$ is open and hence Thomason by Lemma~\ref{lem_open_thomason}. This shows $\mcS$ is compactly generated as, again using the theorem, we have
\begin{displaymath}
\mcS = \tau(\sigma \mcS) = \GGamma_{\sigma \mcS}\D(R) = \langle \D^\mathrm{perf}_{\sigma \mcS}(R)\rangle.
\end{displaymath}
\end{proof}


\subsection{Absolutely flat schemes}\label{sec_schemes}
We indicate here how to extend the results we have obtained in Section~\ref{sec_goodbehaviour} to the analogous class of schemes. This involves no extra work as the formalism of tensor actions allows us to deduce global results affine locally.

Given a scheme $X$ one can globalise Theorem~\ref{thm_af_adj} (see \cite{Olivier_le_foncteur}) to obtain a universal map of schemes $X^\mathrm{abs} \to X$ where $X^\mathrm{abs}$ is an absolutely flat scheme i.e., $X^\mathrm{abs}$ admits an open affine cover by the spectra of absolutely flat rings. For any open affine subscheme $\Spec S$ of $X$ its preimage in $X^\mathrm{abs}$ is just $\Spec S^\mathrm{abs}$.

\begin{thm}
Let $X$ be a topologically noetherian scheme. The action of $\D(X)$ (or $\D(X^\mathrm{abs})$) on $\D(X^\mathrm{abs})$ gives an order preserving bijection
\begin{displaymath}
\left\{ \begin{array}{c}
\text{subsets of}\; X^\mathrm{abs} 
\end{array} \right\}
\xymatrix{ \ar[r]<1ex>^{\tau} \ar@{<-}[r]<-1ex>_{\sigma} &} \left\{
\begin{array}{c}
\text{localising ideals of} \; \D(X^\mathrm{abs}) \\
\end{array} \right\}, 
\end{displaymath}
where for a localising ideal $\mathcal{L}$ and a subset $W\cie X^\mathrm{abs}$ we set
\begin{displaymath}
\sigma(\mathcal{L}) = \{ x\in X^\mathrm{abs} \; \vert \; k(x)\otimes \mathcal{L}\neq 0\} \quad \text{and} \quad \tau(W) = \langle k(x) \; \vert \; x\in W\rangle_\otimes.
\end{displaymath}
\end{thm}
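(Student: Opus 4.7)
The plan is to mirror the proof of Theorem~\ref{thm_class} using the action of $\D(X)$ on $\D(X^\mathrm{abs})$ via $\L\pi^*$, where $\pi\colon X^\mathrm{abs}\to X$ is the canonical map from Theorem~\ref{thm_af_adj}. Since $X$ is topologically noetherian, $\Spc\D^\mathrm{perf}(X)\iso X$ is a noetherian space and \cite{StevensonActions}*{Theorem~6.9} gives the local-to-global principle for any action of $\D(X)$; in particular, for every $F\in\D(X^\mathrm{abs})$ one has $\langle F\rangle_\otimes = \langle \GGamma_\mathfrak{p}\mathcal{O}_X\otimes F \; \vert \; \mathfrak{p}\in X\rangle_\otimes$.

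First I would cover $X$ by finitely many open affines $U_i = \Spec S_i$, with preimages $V_i = \Spec S_i^\mathrm{abs}$ giving an affine open cover of $X^\mathrm{abs}$; each $\Spec S_i$ is topologically noetherian by openness in $X$. For a point $\mathfrak{p}\in X$ with unique lift $P\in X^\mathrm{abs}$, pick any $U_i$ containing $\mathfrak{p}$. On this affine patch, Lemma~\ref{lem_idempotent_computation} together with Proposition~\ref{prop_idempotent_basechange} (applied to the smashing restriction $\D(X^\mathrm{abs})\to \D(V_i)$ and to $\L\pi^*$) identifies the action of $\GGamma_\mathfrak{p}\mathcal{O}_X$ on $\D(X^\mathrm{abs})$ with tensoring by (the pushforward of) $k(P)$. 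Uniqueness of Rickard idempotents ensures that this identification is independent of the choice of $U_i$ containing $\mathfrak{p}$.

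Next I would verify that the support $\sigma(\mathcal{L}) = \{P\in X^\mathrm{abs} \; \vert \; k(P)\otimes\mathcal{L}\neq 0\}$ detects vanishing on $\D(X^\mathrm{abs})$: an object $F$ is zero if and only if $F|_{V_i} = 0$ for every $i$, which by Lemma~\ref{lem_detection} applied to each $V_i$ holds if and only if $F\otimes k(P) = 0$ for every $P\in X^\mathrm{abs}$. Combining this with the local-to-global principle from the first paragraph yields $\langle F\rangle_\otimes = \langle k(P) \; \vert \; P\in\sigma(\langle F\rangle_\otimes)\rangle_\otimes$, and the bijection between subsets of $X^\mathrm{abs}$ and localising ideals of $\D(X^\mathrm{abs})$ then follows formally as in the proof of Theorem~\ref{thm_class} via \cite{StevensonActions}*{Proposition~6.3}.

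The main obstacle is confirming that the affine-local identifications $\L\pi^*(\GGamma_\mathfrak{p}\mathcal{O}_X)\iso k(P)$ assemble into a coherent global description of the action, so that one genuinely reduces to the affine case rather than merely reproducing it patch by patch. This depends on the functoriality of the base-change isomorphisms of Proposition~\ref{prop_idempotent_basechange} under compositions of smashing restrictions with pullback along $\pi$; once this compatibility is recorded, the remainder of the argument is bookkeeping inherited from the affine case, since noetherianity of $X$ delivers the local-to-global principle for free.
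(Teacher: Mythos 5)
Your overall strategy (globalise the affine argument directly, using the action of $\D(X)$ on $\D(X^\mathrm{abs})$ via $\L\pi^*$ and the local-to-global principle from \cite{StevensonActions}*{Theorem~6.9}) is different from the paper's proof, which is a two-line reduction: choose an open affine cover of $X^\mathrm{abs}$ and feed the affine case, Theorem~\ref{thm_class}, into \cite{StevensonActions}*{Theorem~8.11}, whose whole purpose is to let one check such classifications affine-locally. Your route could in principle be made to work, but as written it has a gap at exactly the point you label ``the main obstacle'': you defer it to ``functoriality of the base-change isomorphisms'' and ``bookkeeping'', and that is not where the difficulty lies. First, the part you worry about -- assembling the patchwise identifications $\L f_i^*(\GGamma_{\mathfrak{p}}S_i)\iso k(P)$ into a global statement independent of the chosen affine -- is not an issue at all: since $\pi$ induces a bijection on points, Remark~\ref{rem_vis_idemp} gives directly the global isomorphism $\L\pi^*(\GGamma_{\mathfrak{p}}\mathcal{O}_X)\iso \GGamma_P\mathbf{1}_{\D(X^\mathrm{abs})}$, with no cover needed.

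The real gap is the global analogue of equation~(\ref{eq_idempotent}) and Lemma~\ref{lem_res_split}: to run the $\tau\sigma(\mathcal{L})=\mathcal{L}$ argument you must know, for every $F\in\D(X^\mathrm{abs})$, that $\GGamma_P\mathbf{1}\otimes F$ is nonzero exactly when $k(P)\otimes F$ is, and, crucially, that $\GGamma_P\mathbf{1}\otimes F$ lies in the localising tensor ideal $\langle k(P)\rangle_\otimes$ generated by the skyscraper residue field. Knowing that this object restricts to a coproduct of shifts of $k(P)$ on each patch containing $P$ and to zero on the others does not formally place it in $\langle k(P)\rangle_\otimes$: objects of $\D(X^\mathrm{abs})$ are not determined, even up to membership in a localising ideal, by their restrictions to an open cover without an extra argument, and the local-to-global principle for the $\D(X)$-action cannot supply it, since applying it to $\GGamma_P\mathbf{1}\otimes F$ just returns the same object. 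One genuine fix is to note that the cohomology sheaves of $\GGamma_P\mathbf{1}\otimes F$ are supported at the closed point $P$, which lies in an affine patch $V_i$, and then use the equivalence between complexes supported at $P$ over $X^\mathrm{abs}$ and over $V_i$ together with Lemma~\ref{lem_res_split}; but this is precisely the affine-local-to-global content packaged by \cite{StevensonActions}*{Theorem~8.11}, and once you invoke that result the rest of your global reconstruction becomes unnecessary -- you are back at the paper's proof.
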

\begin{proof}
Given Theorem~\ref{thm_class} one just chooses an open affine cover of $X^\mathrm{abs}$ and applies \cite{StevensonActions}*{Theorem~8.11}.
\end{proof}

\begin{thm}
Let $X$ be a topologically noetherian scheme. The relative telescope conjecture holds for the action of $\D(X^\mathrm{abs})$ on itself i.e., every smashing tensor ideal of $\D(X^\mathrm{abs})$ is generated by objects of $\D^\mathrm{perf}(X^\mathrm{abs})$.
\end{thm}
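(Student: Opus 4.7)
The plan is to mimic the strategy used in the preceding theorem: reduce to the affine case by means of an open affine cover of $X^\mathrm{abs}$, apply the affine telescope conjecture Theorem~\ref{thm_telescope} on each piece, and glue via the tensor action formalism \cite{StevensonActions}*{Theorem~8.11}. Concretely, by the preceding classification theorem a smashing tensor ideal $\mcS \cie \D(X^\mathrm{abs})$ is determined by $\sigma(\mcS)\cie X^\mathrm{abs}$, so it suffices to show $\sigma(\mcS)$ is a Thomason subset of $X^\mathrm{abs}$ and then to identify
\begin{displaymath}
\mcS = \tau(\sigma \mcS) = \GGamma_{\sigma \mcS}\D(X^\mathrm{abs}) = \langle \D^\mathrm{perf}_{\sigma \mcS}(X^\mathrm{abs})\rangle_\otimes,
\end{displaymath}
exactly as in the conclusion of Theorem~\ref{thm_telescope}.

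First I would choose a (finite, by topological noetherianity of $X$) open affine cover $\{\Spec S_i\}$ of $X$, which pulls back to an open affine cover $\{U_i = \Spec S_i^\mathrm{abs}\}$ of $X^\mathrm{abs}$. Pulling $\mcS$ back along the open immersion $U_i \hookrightarrow X^\mathrm{abs}$ produces a smashing tensor ideal $\mcS_i$ of $\D(S_i^\mathrm{abs})$, and the base-change machinery of Section~\ref{sec_basechange} (specifically Proposition~\ref{prop_idempotent_basechange} applied to the restriction functor) ensures $\sigma(\mcS_i) = \sigma(\mcS) \intersec U_i$. Since $\Spec S_i$ is noetherian, Theorem~\ref{thm_telescope} applies to give that $\mcS_i$ is compactly generated; moreover, inspecting the proof of that theorem shows $\sigma(\mcS_i)$ is an open subset of $U_i$ (this is the content of Proposition~\ref{prop_procons} combined with Lemma~\ref{lem_open_thomason}).

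Next I would glue. Openness is a local property, so $\sigma(\mcS)$ is open in $X^\mathrm{abs}$. Because the cover is finite and each $\sigma(\mcS)\intersec U_i$ is Thomason in $U_i$ (hence a union of quasi-compact open subsets of $X^\mathrm{abs}$ whose complements in $U_i$ are quasi-compact), the resulting subset $\sigma(\mcS)$ is Thomason in $X^\mathrm{abs}$. With this in hand, the preceding classification gives the compact generation of $\mcS$ displayed above; alternatively one can apply \cite{StevensonActions}*{Theorem~8.11} directly to assemble the affine-local compact generators of the $\mcS_i$ into generators of $\mcS$.

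The main obstacle I foresee is the bookkeeping around the restriction step: verifying that the pullback $\mcS_i$ along an open immersion really is smashing in $\D(U_i)$ and that its classifying subset matches $\sigma(\mcS)\intersec U_i$. Both of these should follow cleanly from the base-change results of Section~\ref{sec_basechange} and the action-theoretic framework in \cite{StevensonActions}, but they are the only non-formal ingredients, and care is needed since $X^\mathrm{abs}$ itself need not globally carry the constructible topology that makes the affine argument transparent.
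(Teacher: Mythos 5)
Your argument is correct, and it reaches the conclusion by a genuinely different gluing mechanism than the paper. The paper's proof also reduces to the affine statement (Theorem~\ref{thm_telescope}) over an open affine cover of $X^\mathrm{abs}$, but it then delegates all of the gluing to \cite{BaRickard}*{Theorem~6.6}, which says precisely that the (relative) telescope conjecture can be checked on an open cover; the bookkeeping you single out as the main obstacle -- that restricting a smashing ideal along $j_i^*\colon \D(X^\mathrm{abs})\to \D(U_i)$ yields a smashing ideal and that the classifying subsets are compatible -- is exactly what that citation packages. Your route instead runs through the global classification theorem (via \cite{StevensonActions}*{Theorem~8.11}) and a hand-made locality argument for Thomason subsets, which makes the proof more self-contained at the cost of a little extra verification. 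Those verifications do go through: a smashing ideal is the image of tensoring with its idempotent $\GGamma_\mcS\mathbf{1}$, the monoidal functor $j_i^*$ sends the idempotent triangle to an idempotent triangle and is essentially surjective (it is a finite localisation), so $j_i^*\mcS$ is the smashing ideal attached to $j_i^*\GGamma_\mcS\mathbf{1}$; and $\sigma(\mcS_i)=\sigma(\mcS)\intersec U_i$ follows simply because $k(x)\otimes A$ for $x\in U_i$ only depends on $A|_{U_i}$ (note that Proposition~\ref{prop_idempotent_basechange}, which you cite here, is about the idempotents $\GGamma_{\mathcal V}$ for Thomason $\mathcal V$ rather than arbitrary smashing ideals, so it is not quite the right reference -- but nothing deep is needed). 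Finally, your caution about the global topology is unnecessary: the preimage of each quasi-compact open affine $\Spec S_i$ is $\Spec S_i^\mathrm{abs}\iso(\Spec S_i)^\mathrm{con}$ and these glue to the patch space of $X$, so $X^\mathrm{abs}$ is a quasi-compact Hausdorff spectral space in which closed equals quasi-compact and Thomason equals open, exactly as in Lemmas~\ref{lem_closed_qcompact} and~\ref{lem_open_thomason}; alternatively, your purely local argument (openness is local, and a finite union of quasi-compact opens whose local complements are quasi-compact is Thomason) suffices as written. In short: same affine reduction as the paper, different key gluing lemma; the paper's citation is shorter, your version exposes the support-theoretic content.
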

\begin{proof}
Again this follows from the result in the affine case, namely Theorem~\ref{thm_telescope}. One chooses an open affine cover for $X^\mathrm{abs}$ and applies \cite{BaRickard}*{Theorem~6.6}.
\end{proof}

  \bibliography{greg_bib}

\end{document}